\theoremstyle{plain}
\newtheorem{thm}{\protect\theoremname}
\theoremstyle{plain}
\theoremstyle{plain}
\theoremstyle{remark}
\theoremstyle{plain}
\theoremstyle{plain}
\newtheorem{prop}[thm]{\protect\propositionname}
\theoremstyle{remark}
\theoremstyle{remark}
\newtheorem{rem}[thm]{\protect\remarkname}
\theoremstyle{definition}
\newtheorem{defn}[thm]{\protect\definitionname}
\theoremstyle{definition}
\newtheorem{example}[thm]{\protect\examplename}
\theoremstyle{plain}
\newtheorem{cor}[thm]{\protect\corollaryname}
\newcommand{\cellsize}{20}
\newlength{\cellsz} \setlength{\cellsz}{\cellsize\unitlength}
\newsavebox{\cell}
\sbox{\cell}{\begin{picture}(\cellsize,\cellsize)
	\put(0,0){\line(1,0){\cellsize}}
	\put(0,0){\line(0,1){\cellsize}}
	\put(\cellsize,0){\line(0,1){\cellsize}}
	\put(0,\cellsize){\line(1,0){\cellsize}}
	\end{picture}}
\newcommand\cellify[1]{\def\thearg{#1}\def\nothing{}%
	\ifx\thearg\nothing
	\vrule width0pt height\cellsz depth0pt\else
	\hbox to 0pt{\usebox{\cell} \hss}\fi%
	\vbox to \cellsz{
		\vss
		\hbox to \cellsz{\hss$#1$\hss}
		\vss}}
\newcommand\tableau[1]{\vtop{\let\\\cr
		\baselineskip -16000pt \lineskiplimit 16000pt \lineskip 0pt
		\ialign{&\cellify{##}\cr#1\crcr}}}
\newcommand{\kellsize}{20}
\newlength{\kellsz} \setlength{\kellsz}{\kellsize\unitlength}
\newsavebox{\kell}
\sbox{\kell}{\begin{picture}(\kellsize,\kellsize)
	\put(0,0){\line(1,0){\kellsize}}
	\put(0,0){\line(0,1){\kellsize}}
	\put(\kellsize,0){\line(0,1){\kellsize}}
	\put(0,\kellsize){\line(1,0){\kellsize}}
	\end{picture}}
\newcommand\kellify[1]{\def\thearg{#1}\def\nothing{}%
	\ifx\thearg\nothing
	\vrule width0pt height\kellsz depth0pt\else
	\hbox to 0pt{\usebox{\kell} \hss}\fi%
	\vbox to \kellsz{
		\vss
		\hbox to \kellsz{\hss$#1$\hss}
		\vss}}
\newcommand\ktableau[1]{\vtop{\let\\\cr
		\baselineskip -16000pt \lineskiplimit 16000pt \lineskip 0pt
		\ialign{&\kellify{##}\cr#1\crcr}}}
\newcommand{\sellsize}{10}
\newlength{\sellsz} \setlength{\sellsz}{\sellsize\unitlength}
\newsavebox{\sell}
\sbox{\sell}{\begin{picture}(\sellsize,8)
	\put(0,0){\line(1,0){\sellsize}}
	\put(0,0){\line(0,1){\sellsize}}
	\put(\sellsize,0){\line(0,1){\sellsize}}
	\put(0,\sellsize){\line(1,0){\sellsize}}
	\end{picture}}
\newcommand\sellify[1]{\def\thearg{#1}\def\nothing{}%
	\ifx\thearg\nothing
	\vrule width0pt height\sellsz depth0pt\else
	\hbox to 0pt{\usebox{\sell} \hss}\fi%
	\vbox to \sellsz{
		\vss
		\hbox to \sellsz{\hss$#1$\hss}
		\vss}}
\newcommand\stableau[1]{\vtop{\let\\\cr
		\baselineskip -16000pt \lineskiplimit 16000pt \lineskip 0pt
		\ialign{&\sellify{##}\cr#1\crcr}}}
\newcommand{\ssellsize}{5}
\newlength{\ssellsz} \setlength{\ssellsz}{\ssellsize\unitlength}
\newsavebox{\ssell}
\sbox{\ssell}{\begin{picture}(\ssellsize,4)
	\put(0,0){\line(1,0){\ssellsize}}
	\put(0,0){\line(0,1){\ssellsize}}
	\put(\ssellsize,0){\line(0,1){\ssellsize}}
	\put(0,\ssellsize){\line(1,0){\ssellsize}}
	\end{picture}}
\newcommand\ssellify[1]{\def\thearg{#1}\def\nothing{}%
	\ifx\thearg\nothing
	\vrule width0pt height\sellsz depth0pt\else
	\hbox to 0pt{\usebox{\ssell} \hss}\fi%
	\vbox to \ssellsz{
		\vss
		\hbox to \ssellsz{\hss$#1$\hss}
		\vss}}
\newcommand\sstableau[1]{\vtop{\let\\\cr
		\baselineskip -16000pt \lineskiplimit 16000pt \lineskip 0pt
		\ialign{&\ssellify{##}\cr#1\crcr}}}
\providecommand{\claimname}{\inputencoding{latin9}Claim}
\providecommand{\conjecturename}{\inputencoding{latin9}Conjecture}
\providecommand{\corollaryname}{\inputencoding{latin9}Corollary}
\providecommand{\definitionname}{\inputencoding{latin9}Definition}
\providecommand{\examplename}{\inputencoding{latin9}Example}
\providecommand{\lemmaname}{\inputencoding{latin9}Lemma}
\providecommand{\notename}{\inputencoding{latin9}Note}
\providecommand{\propositionname}{\inputencoding{latin9}Proposition}
\providecommand{\questionname}{\inputencoding{latin9}Question}
\providecommand{\remarkname}{\inputencoding{latin9}Remark}
\providecommand{\theoremname}{\inputencoding{latin9}Theorem}
\newcommand{\symm}{\mathfrak{S}}
\newcommand\twoheaduparrow{\mathrel{\rotatebox{90}{$\twoheaduparrow$}}}
\newcommand\twoheaddownarrow{\mathrel{\rotatebox{270}{$\twoheaddownarrow$}}}
\begin{document}
	\author{Nick Early}
	\thanks{The author was partially supported by RTG grant NSF/DMS-1148634,\\
		Max Planck Institute for Mathematics in the Sciences, email: \href{mailto:earlnick@gmail.com}{earlnick@gmail.com}}
	
	\title[Generalized permutohedra in the kinematic space]{Generalized permutohedra in the kinematic space}
	
	\maketitle

	\begin{abstract}
	In this note, we study the permutohedral geometry of the singularities of a certain differential form introduced in recent work of Arkani-Hamed, Bai, He and Yan.  There it was observed that the poles of the form determine a family of polyhedra which have the same face lattice as that of the permutohedron.  We realize that family explicitly, proving that it in fact fills out the configuration space of a particularly well-behaved family of generalized permutohedra, the zonotopal generalized permutohedra, that are obtained as the Minkowski sums of line segments parallel to the root directions $e_i-e_j$.

	Finally we interpret Mizera's formula for the biadjoint scalar amplitude $m(\mathbb{I}_n,\mathbb{I}_n)$, restricted to a certain dimension $n-2$ subspace of the kinematic space, as a sum over the boundary components of the standard root cone, which is the conical hull of the roots $e_1-e_2,\ldots, e_{n-2}-e_{n-1}$.

\end{abstract}

%\newpage
\begingroup
\let\cleardoublepage\relax
\let\clearpage\relax
\tableofcontents
\endgroup

\section{The kinematic space}\label{sec: Kinematic space}

Let $I=\{a,1,2,\ldots,n-1,n,b \}$, where for our purposes $a,b$ are auxiliary indices.

\begin{defn}
	Define the kinematic space to be the subspace of $\mathbb{R}^{(n+2)^2}$,
	$$\mathcal{K}^n=\left\{(s_{ij})\in\mathbb{R}^{(n+2)^2}:s_{ii}=0,\ s_{ij} = s_{ji},\ i,j\in I,\text{ and } \sum_{j\in I:j\not=i} s_{ij}=0 \text{ for all }i\in I\right\}.$$

	Denote by $\mathcal{K}^n(\mathcal{D})$ the intersection in $\mathcal{K}^n$ of the $\binom{n}{2}$ affine hyperplanes $s_{ij} = -c_{ij}$ for all $1\le i<j\le n$ for given constants $\mathcal{D}=(c_{ij})$ with $c_{ij}\ge 0$.
\end{defn}
We denote 
$$s_{J} = \sum_{i,j\in J:\ i<j}s_{ij} $$
and  
$$s_{J_1\vert J_2} = \sum_{(i,j)\in J_1\times J_2} s_{ij}$$
for (nonempty) subsets $J, J_1,J_2$ of $I$, where $J_1\cap J_2=\emptyset$.  Let us adopt the natural convention that $s_{J}=0$ for any singlet $J=\{j\}$.

Note that $\dim(\mathcal{K}^n)=\binom{n+2}{2}-(n+2) = \frac{(n+2)(n-1)}{2}$, and
$$\dim(\mathcal{K}^n(\mathcal{D})) = \binom{n+2}{2} - (n+2)-\binom{n}{2} = n-1.$$

In \cite{worldsheet}, after adjusting the notation, the inequalities corresponding to the facets of a polyhedral cone were given as 
$$s_{aJ}=s_{aj_1\cdots j_k}\ge 0,$$
as $J$ ranges over all $2^n-2$ proper nonempty subsets of $\{1,\ldots, n\}$.

\begin{rem}
	The coordinates $s_{ij}$ actually have an essential structure which is not needed explicitly for our results.  They are called generalized \textit{Mandelstam} invariants.  They are constructed from momentum vectors $p_1,\ldots, p_{n+2}$ of a system of particles satisfying momentum conservation $\sum_{i=1}^{n+2} p_i=0$, in spacetime of dimension $D\ge n+1$ with the Minkowski inner product, and are defined by $s_{ij} = (p_i+p_j)^2 = 2p_i\cdot p_j$, due the assumption that particles are massless, that is $p_i\cdot p_i=0$, where we are using the notation $p^2:=p\cdot p$ for any linear combination of momentum vectors $p$.
\end{rem}

\begin{rem}
	It is worth pointing out that, as an $\symm_{n+2}$-module, $\mathcal{K}^n$ is irreducible, and is isomorphic to to $V_{(n,2)}$, that is the $\frac{(n+2)(n-1)}{2}$-dimensional irreducible representation labeled by the partition $(n,2)$ of $n+2$.  
\end{rem}

In polyhedral geometry, a zonotope is a Minkowski sum of line segments $\lbrack 0,v\rbrack$ for points $v\in \mathbb{R}^n$.  In particular, a zonotopal generalized permutohedron \cite{PostnikovPermutohedra}, see also \cite{FacesPermutohedra}, is the Minkowski sum of a collection of line segments parallel to the root directions $e_i-e_j$.  In Theorem \ref{thm:zonotopal permutohedron proof 3} we derive and use the formula for the characteristic function of the intersection of the set of affine hyperplanes $s_{ij} = -c_{ij}$ with the region determined by the set of $2^{n}-2$ inequalities 
$$s_{aJ}=s_{aj_1\cdots j_k}\ge 0,$$
where $J$ varies over all nonempty proper subsets of $\{1,\ldots, n\}$, to show in particular that each such intersection is a zonotopal generalized permutohedron, and all zonotopal generalized permutohedra are obtained in this way.

It may seem surprising a priori that the canonical form for the permutohedron, on the kinematic space, as in \cite{worldsheet}, would encode so specialized a family as the \textit{zonotopal} generalized permutohedra. However, it is also a much studied and indeed relatively well-understood special case which already has some powerful results and techniques that immediately become available.  For example, consider this: there is a certain quasi-symmetric function invariant of the normal cone of a generalized permutohedron, see Section 9.1 of \cite{Billera}.  It turns out for the zonotopal generalized permutohedra, this quasi-symmetric function coincides with Stanley's chromatic \textit{symmetric} function.  For other generalized permutohedra this invariant is likely to be only a quasisymmetric function; indeed, this happens already for the equilateral triangle.  

While it is not true for general graphs, it is currently being studied whether Stanley's chromatic symmetric function can distinguish those zonotopal generalized permutohedra which are encoded by trees.  This question was originally posed by Stanley in \cite{StanleyChromatic}.

\section{Main result}

Let us comment briefly on how we resolve and future-proof some potentially conflicting conventions.  In \cite{worldsheet}, $(n-3)$-dimensional generalized permutohedra were embedded in an ambient space of dimension $n-2$ in a system of $n$ particles.  However, in the usual mathematical literature on permutohedra, they are usually taken to be dimension $n-1$ in an ambient space of dimension $n$.  

For our main result we consider the configuration space of $(n-1)$-dimensional zonotopal generalized permutohedra in an $n$-dimensional ambient space; this increases the requisite number of particles to $n+2$.

However, in Sections \ref{sec: kinematic associahedron} and Appendix \ref{sec: Appendix triangulations dual associahedra} it is convenient to denote $m=n+1$.  

We first recall the formulation of the canonical form for the permutohedron from \cite{worldsheet}, which can be constructed from family of simplicial cones, called plates in \cite{EarlyCanonicalBasis}.  Plates are also related to the study of matroid subdivisions \cite{E2022}.

Let the matrix of constants $\mathcal{D}$ be given.

The canonical form for the permutohedron from Section 10 of \cite{worldsheet} is
\begin{eqnarray}\label{eqn: canonical form}
\left(\sum_{\sigma\in\symm_n}\frac{1}{\prod_{i=1}^{n-1}s_{a\sigma_1\sigma_2\cdots \sigma_i}}\right)d^{n-1}s,
\end{eqnarray}
where for compatibility with the conventions for generalized permutohedra our index set is taken to be $\mathcal{I} = \{a,1,\ldots, n,b\}$ rather than the set $\{1,\ldots, n\}$ from \cite{worldsheet}.  In Equation \eqref{eqn: canonical form}, the summand $1/\prod_{i=1}^{n-1}s_{a\sigma_1\sigma_2\cdots \sigma_i}$ is associated to the so-called multi-peripheral graph shown in Figure \ref{fig:multi-peripheralgraph}.
\begin{figure}[h!]
	\centering
	\includegraphics[width=0.4\linewidth]{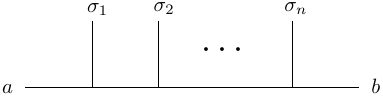}
	\caption{A multi-peripheral Feynman diagram, corresponding to the polyhedral cone defined by Equation \eqref{eqn:kinematicPlate}.  Here $\sigma=(\sigma_1,\ldots, \sigma_n)$ is any permutation of $\{1,\ldots, n\}$.}
	\label{fig:multi-peripheralgraph}
\end{figure}

Note that each pole $s_{a\sigma_1\cdots \sigma_i}=0$ which appears decomposes $\mathcal{K}^n$, and thus $\mathcal{K}^n(\mathcal{D})$, into two half spaces; choosing the half spaces $s_{a\sigma_1\cdots \sigma_i}\ge 0$, then for each of the $n!$ summands we obtain a flag of inequalities
\begin{eqnarray}\label{eqn:kinematicPlate}
\nonumber		s_{a\sigma_1} & \ge & 0\\
\nonumber		s_{a\sigma_1\sigma_2} & \ge & 0\\
s_{a\sigma_1\sigma_2\sigma_3}& \ge & 0\\
\nonumber		& \vdots & \\
\nonumber		s_{a\sigma_1\sigma_2\cdots \sigma_{n-1}} & \ge & 0\\
\nonumber		s_{a12\cdots n} & = & 0.
\end{eqnarray}

\begin{defn}
	Denote by $\lbrack\lbrack \sigma_1,\sigma_2,\ldots, \sigma_n\rbrack\rbrack_{\mathcal{D}}$ the characteristic function of the cone in $\mathcal{K}^n_{\mathcal{D}}$ determined by \eqref{eqn:kinematicPlate}.
	
	More generally, denote by $\lbrack\lbrack S_1,\ldots, S_k\rbrack\rbrack_\mathcal{D}$, where $(S_1,\ldots, S_k)$ is any ordered set partition of $\{1,\ldots, n\}$, the characteristic function of the cone in $\mathcal{K}^n(\mathcal{D})$ determined by the inequalities
	\begin{eqnarray}\label{eqn:kinematicPlateLumped}
	\nonumber	s_{aS_1} & \ge & 0\\
	\nonumber	s_{aS_1\cup S_2} &  \ge & 0\\
	s_{aS_1\cup S_2\cup S_3} &  \ge & 0\\
	\nonumber	& \vdots &\\
	\nonumber	s_{aS_1\cup S_2\cup\cdots\cup S_{k-1}} &  \ge & 0\\
	\nonumber	s_{a12\cdots n} & = & 0.
	\end{eqnarray}
\end{defn}

\begin{prop}\label{prop:linear relations}
	The equality in the last line of Equations \eqref{eqn:kinematicPlate} and \eqref{eqn:kinematicPlateLumped} holds identically on $\mathcal{K}^n$, and in particular on each affine subspace $\mathcal{K}^n(\mathcal{D})$.
\end{prop}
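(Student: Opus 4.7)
The plan is to show that $s_{a12\cdots n}$ vanishes identically on $\mathcal{K}^n$ as an immediate consequence of the two sets of defining linear relations, namely the symmetry $s_{ij}=s_{ji}$ together with momentum conservation $\sum_{j\in I\setminus\{i\}} s_{ij}=0$ for every $i\in I$. Since $\mathcal{K}^n(\mathcal{D})$ is an affine subspace of $\mathcal{K}^n$, it suffices to prove the identity on $\mathcal{K}^n$.

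The key step is to double-count. Setting $J=\{a,1,\ldots,n\}$, so that $I\setminus J=\{b\}$, I would unfold the definition as
\[
2\,s_{a12\cdots n} \;=\; \sum_{i\in J}\,\sum_{\substack{j\in J\\ j\neq i}} s_{ij}.
\]
For each fixed $i\in J$, the inner sum differs from the full momentum-conservation sum at $i$ only by the single missing term $s_{ib}$; that is,
\[
\sum_{\substack{j\in J\\ j\neq i}} s_{ij} \;=\; \Biggl(\sum_{\substack{j\in I\\ j\neq i}} s_{ij}\Biggr) \;-\; s_{ib} \;=\; -s_{ib},
\]
using $\sum_{j\in I\setminus\{i\}} s_{ij}=0$. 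Summing over $i\in J$ gives
\[
2\,s_{a12\cdots n} \;=\; -\sum_{i\in J} s_{ib} \;=\; -\sum_{i\in I\setminus\{b\}} s_{bi},
\]
where in the last equality I use $s_{ib}=s_{bi}$.

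The final step is to apply momentum conservation one more time, now at the index $i=b$, which says exactly $\sum_{i\in I\setminus\{b\}} s_{bi}=0$. Therefore $2\,s_{a12\cdots n}=0$, and since the identity is purely linear it holds on all of $\mathcal{K}^n$, hence also on each $\mathcal{K}^n(\mathcal{D})$, as required. The same argument applies verbatim to the last line of \eqref{eqn:kinematicPlateLumped}, because $S_1\cup\cdots\cup S_k=\{1,\ldots,n\}$ for any ordered set partition, so the expression in question is literally $s_{a12\cdots n}$ again.

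There is essentially no obstacle here; the only point requiring care is the bookkeeping of indices, in particular the observation that the partial momentum-conservation sum over $J=I\setminus\{b\}$ has exactly the $s_{ib}$ terms missing, and that these are precisely the terms collectively killed by the momentum-conservation relation at $b$. This double application of momentum conservation—once at every $i\in J$, once at $b$—is the whole content of the proof.
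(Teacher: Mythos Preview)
Your proof is correct and follows essentially the same approach as the paper's: both arguments reduce $s_{a12\cdots n}$ to $-s_{a12\cdots n\vert b}$ via the momentum-conservation relations at the indices in $J=\{a,1,\ldots,n\}$, and then kill that term using the relation at $b$. The only cosmetic difference is that the paper first sums the conservation relations over \emph{all} of $I$ to obtain $s_{a12\cdots nb}=0$ and then peels off the $b$-piece, whereas you sum only over $J$ and isolate the $s_{ib}$ terms directly; your route is marginally more direct but the content is identical.
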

\begin{proof}
	Since 
	\begin{eqnarray*}
		0 & = &  \sum_{j\in I}\left(\sum_{i\not =j}s_{ij}\right) =2\sum_{i<j} s_{ij} = 2s_{a12\cdots nb} \\
		& = &  2\left( s_{a\vert 12\cdots n}+s_{12\cdots n}+s_{a 12\cdots n\vert b}\right)
	\end{eqnarray*}	
	and $s_{a 12\cdots n\vert b}=0$, we have automatically
	$$0=s_{a\vert 12\cdots n} + s_{12\cdots n}=s_{a12\cdots n}.$$
	
\end{proof}
Rearranging Equation \eqref{eqn:kinematicPlate} we obtain	
\begin{eqnarray*}
	s_{a\sigma_1} & \ge & 0\\
	s_{a\vert \sigma_1\sigma_2} & \ge & -s_{\sigma_1\sigma_2}\\
	s_{a\vert \sigma_1\sigma_2\sigma_3} & \ge & -(s_{\sigma_1\sigma_2}+s_{\sigma_1\sigma_2\vert \sigma_3})\\
	& \vdots &\\
	s_{a\vert 12\cdots n} & = & -(s_{12}+s_{12\vert 3}+\cdots+s_{12\cdots n-1\vert n}).
\end{eqnarray*}
Letting $x_i=s_{ai}$ for $i=1,\ldots, n$ and, following \cite{worldsheet}, putting $s_{ij}=-c_{ij}$ for the given constants $c_{ij}\in\mathcal{D}$, this becomes
\begin{eqnarray*}
	x_\sigma & \ge & 0\\
	x_{\sigma_1\sigma_2} & \ge & c_{12}\\
	x_{\sigma_1\sigma_2\sigma_3} & \ge & c_{\sigma_1\sigma_2}+c_{\sigma_1\sigma_2\vert \sigma_3}\\
	& \vdots &\\
	x_{12\cdots n} & = & c_{12\cdots n},
\end{eqnarray*}
which defines a permutohedral cone which we denote $\lbrack 1_{d_1} 2_{d_2}\ldots n_{d_n}\rbrack$, where $d_j = \sum_{i=1}^{j-1} c_{ij}$, that is, in the notation of \cite{EarlyCanonicalBasis}, the translation of the plate $\lbrack 1,2,\ldots, n\rbrack$ by the vector $(d_1,d_2,\ldots, d_n)$.

\begin{rem}
	The inequalities take on a pleasant form when we express the variables in terms of the momentum vectors $p_a,p_1,\ldots, p_n,p_b$.  Recall the standard notation $p^2:=p\cdot p$, where $p$ is any linear combination of momentum vectors $p_i$.  Then we have
\begin{eqnarray*}
	p_a\cdot p_{\sigma_1} & \ge & 0\\
	p_a\cdot (p_{\sigma_1}+p_{\sigma_2}) & \ge &  -p_{\sigma_1}\cdot p_{\sigma_2}\\
	p_a\cdot (p_{\sigma_1}+p_{\sigma_2}+p_{\sigma_3}) & \ge &-\sum_{1\le i<j\le 3}p_{\sigma_i}\cdot p_{\sigma_j}\\
	& \vdots &\\
	p_a\cdot (p_{\sigma_1}+p_{\sigma_2}+\cdots + p_{\sigma_{n}}) & = & -\sum_{1\le i<j\le n}p_{\sigma_i}\cdot p_{\sigma_j}\\
\end{eqnarray*}
\end{rem}
Recall that the matrix of constants $\mathcal{D}=(c_{ij})$ has been fixed.  Let us denote by $\lbrack\lbrack ij_{c_{ij}}\rbrack\rbrack$ the characteristic function of the interval $$\lbrack ij_{c_{ij}}\rbrack=\{c_{ij}(te_i+(1-t)e_j):0\le t\le 1\},$$  
where $e_1,\ldots, e_n$ is the standard basis for $\mathbb{R}^n$.

\begin{defn}
	Denote by $\mathcal{Z}_\mathcal{D}$ the \textit{zonotopal} generalized permutohedron \cite{PostnikovPermutohedra}, defined to be the Minkowski sum of the dilated root intervals $\lbrack ij_{c_{ij}}\rbrack$, which has characteristic function
	$$\lbrack\lbrack 12_{c_{12}}\rbrack + \lbrack 13_{c_{13}}\rbrack + \cdots + \lbrack 1n_{c_{1n}}\rbrack + \lbrack 23_{c_{23}}\rbrack + \cdots + \lbrack (n-1)n_{c_{(n-1)n}}\rbrack\rbrack.$$

\end{defn}
As a side remark, note that this may be equivalently expressed equivalently in the algebra of characteristic functions using the convolution product, with respect to the Euler characteristic, as
$$\lbrack\lbrack 12_{c_{12}}\rbrack\rbrack \bullet \lbrack\lbrack 13_{c_{13}}\rbrack\rbrack \bullet \cdots \bullet \lbrack\lbrack 1n_{c_{1n}}\rbrack\rbrack \bullet\lbrack\lbrack 23_{c_{23}}\rbrack\rbrack \bullet  \cdots \bullet \lbrack\lbrack (n-1)n_{c_{(n-1)n}}\rbrack\rbrack.$$

For details about the convolution product and related issues in convex geometry, see \cite{BarvinokPammersheim}.  See also \cite{EarlyCanonicalBasis}, which collected a subset of the basic results from convex geometry, in the notation used here.

\begin{thm}\label{thm:zonotopal permutohedron proof 3}
	The intersection $\Pi_\mathcal{D}$ in $\mathcal{K}^n(\mathcal{D})$ of the half regions $s_{aJ}\ge 0$, as $J$ varies over all $2^n-2$ proper nonempty subsets of $\{1,\ldots, n\}$, is the zonotopal generalized permutohedron $\mathcal{Z}_D$.
\end{thm}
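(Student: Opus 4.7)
The strategy is to compare the half-space description of $\Pi_{\mathcal{D}}$ produced in the preceding discussion with the standard support-function description of $\mathcal{Z}_{\mathcal{D}}$. The substitution $x_i=s_{ai}$, combined with Proposition \ref{prop:linear relations} and the convention $s_{ij}=-c_{ij}$, already rewrites the system $\{s_{aJ}\ge 0\}$ on $\mathcal{K}^n(\mathcal{D})$ as
\begin{equation*}
x_J \;\ge\; c_J \;:=\; \sum_{\substack{i<j\\ i,j\in J}}c_{ij}\quad (J\subsetneq\{1,\ldots,n\}),\qquad x_{\{1,\ldots,n\}}\;=\;c_{\{1,\ldots,n\}},
\end{equation*}
so the content of the theorem is that this system cuts out exactly the Minkowski sum $\mathcal{Z}_{\mathcal{D}}=\sum_{i<j}c_{ij}[e_i,e_j]$.

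The inclusion $\mathcal{Z}_{\mathcal{D}}\subseteq\Pi_{\mathcal{D}}$ I would prove by direct Minkowski arithmetic. A point $x\in\mathcal{Z}_{\mathcal{D}}$ decomposes as $x=\sum_{i<j}x^{(ij)}$ with $x^{(ij)}\in c_{ij}[e_i,e_j]$; the partial sum $x^{(ij)}_J$ equals $c_{ij}$ when $\{i,j\}\subseteq J$ and is nonnegative otherwise, so summing over edges yields $x_J\ge c_J$, with equality forced at $J=\{1,\ldots,n\}$. Equivalently, this reads off the support function values $h_{\mathcal{Z}_{\mathcal{D}}}(\pm\mathbf{1}_J)$.

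For the reverse inclusion $\Pi_{\mathcal{D}}\subseteq\mathcal{Z}_{\mathcal{D}}$ I would appeal to Postnikov's dictionary \cite{PostnikovPermutohedra} between supermodular set functions and generalized permutohedra. First, a short edge-by-edge argument shows that $f(J):=c_J$ is supermodular: the difference $f(J_1\cup J_2)+f(J_1\cap J_2)-f(J_1)-f(J_2)$ receives a $+c_{ij}$ contribution from the edge $\{i,j\}$ precisely when $i\in J_1\setminus J_2$ and $j\in J_2\setminus J_1$ (or vice versa), and no other edge contributes. Hence $\Pi_{\mathcal{D}}$ is the base polytope of a supermodular function and is therefore a generalized permutohedron, with canonical Minkowski decomposition $\sum_{S\subseteq\{1,\ldots,n\}} y_S\,\Delta_S$ whose coefficients $y_S$ are the Möbius inverse of $f$ on the Boolean lattice. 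A direct computation of this Möbius inversion for $f(J)=\sum_{\{i,j\}\subseteq J}c_{ij}$ gives $y_{\{i,j\}}=c_{ij}$ and $y_S=0$ for $|S|\ne 2$, so the base polytope equals $\sum_{i<j}c_{ij}\Delta_{\{i,j\}}=\mathcal{Z}_{\mathcal{D}}$.

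The main obstacle is the reverse inclusion, which rests on the simultaneous tightness of all $2^n-2$ rank inequalities; the supermodularity of $f$ is precisely what certifies this. If one prefers to avoid invoking the Postnikov/Edmonds machinery, an alternative route is to exhibit the $n!$ vertices of $\Pi_{\mathcal{D}}$ directly: for each $\sigma\in\symm_n$ the greedy assignment $x_{\sigma_k}=c_{\sigma_1\cdots\sigma_k}-c_{\sigma_1\cdots\sigma_{k-1}}$ saturates the chain of inequalities along $\sigma$ and coincides with the sum, over edges $\{i,j\}$, of the endpoint of $c_{ij}[e_i,e_j]$ selected by whichever of $i,j$ appears later in $\sigma$, identifying it as a vertex of $\mathcal{Z}_{\mathcal{D}}$. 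Either route bottoms out at the same supermodularity computation above.
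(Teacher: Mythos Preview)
Your proposal is correct and largely parallels the paper's own argument. Both proofs begin with the same supermodularity computation for $c_J$ (the paper writes it out via the decomposition $A_{10},A_{11},A_{01}$, you do it edge-by-edge), invoke Postnikov's theorem to conclude $\Pi_{\mathcal{D}}$ is a generalized permutohedron, and verify $\mathcal{Z}_{\mathcal{D}}\subseteq\Pi_{\mathcal{D}}$ by direct Minkowski arithmetic.

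The one genuine difference is in the reverse inclusion. The paper computes the vertices $v_\sigma$ of $\Pi_{\mathcal{D}}$ from the saturated chains, then exhibits each $v_\sigma$ as a point of the zonotope by choosing $t_{ij}\in\{0,1\}$ according to the inversion set of $\sigma$---this is exactly the ``alternative route'' you sketch at the end. Your primary route instead reads off the Minkowski coefficients $y_S$ by M\"obius inversion of $f(J)=c_J$ on the Boolean lattice, obtaining $y_{\{i,j\}}=c_{ij}$ and $y_S=0$ otherwise, which identifies $\Pi_{\mathcal{D}}$ with $\mathcal{Z}_{\mathcal{D}}$ in one stroke. This is slightly slicker: it bypasses the separate vertex computation and the explicit $\mathbf{t}$-assignment, at the cost of invoking a bit more of Postnikov's machinery (the bijection between supermodular $z$ and nonnegative $y$). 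Either way the crux is the same supermodularity check.
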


\begin{proof}
	In the affine subspace $\mathcal{K}^n(\mathcal{D}) = \{(s)\in\mathcal{K}^n:s_{ij}=-c_{ij}, \text{ for } 1\le i<j\le n\}$, the inequalities $s_J\ge 0$ take the form $x_J\ge c_J$.  We claim that these equations define a generalized permutohedron.  Indeed, it follows from Theorem 6.3 of \cite{PostnikovPermutohedra}, see also \cite{AguiarAdrilaPermutohedra,Morton et al}, that the data $c_J$ as above determine a generalized permutohedron if and only if we have the supermodularity conditions
	$$c_I+c_J\le c_{I\cup J}+c_{I\cap J}$$
	for all nonempty subsets $I,J\subsetneq\{1,\ldots, n\}$.
	
	Set $A_{10}= I\setminus(I\cap J)$, $A_{11}=I\cap J$, and $A_{01}=J\setminus(I\cap J)$.  Then we have 
	\begin{eqnarray*}
		c_{I\cup J} = c_{A_{10}\sqcup A_{11}\sqcup A_{01}}
		& = & c_{A_{10}}+c_{A_{11}} + c_{A_{01}} + c_{A_{01}\vert  A_{11}} + c_{A_{01}\vert A_{10}} + c_{A_{10}\vert A_{11}}\\
		& = & (c_{A_{10}}+c_{A_{11}}+ c_{A_{10}\vert A_{11}}) + (c_{A_{01}}+c_{A_{11}} + c_{A_{01}\vert A_{11}}) + c_{A_{10}\vert A_{01}}  - c_{A_{11}}\\
		c_{I\cup J} + c_{I\cap J} & = & c_I+c_J +  c_{A_{10}\vert A_{01}}\\
		\Rightarrow c_I+c_J & = & c_{I\cup J} + c_{I\cap J} -  c_{A_{10}\vert A_{01}}\\
		& \le & c_{I\cup J} + c_{I\cap J},
	\end{eqnarray*}
	since  $c_{A_{10}\vert A_{01}}\ge 0$. 	Hence the equations $x_J\ge c_J$ define a generalized permutohedron which we denote by $\Pi_\mathcal{D}$.  It follows that the $n!$ vertices $v_\sigma$ are labeled by permutations $\sigma=(\sigma_1,\ldots,\sigma_n)$ of $(1,\ldots, n)$, and are obtained by solving the systems of equations 
	\begin{eqnarray*}
		s_{a\sigma_1} & = & 0\\
		s_{a\sigma_1\sigma_2} & = & 0\\
		s_{a\sigma_1\sigma_2\sigma_3} & = & 0\\
		&\vdots &\\
		s_{a\sigma\cdots\sigma_n} & = & 0,
	\end{eqnarray*}	
that is
	\begin{eqnarray*}
	x_{\sigma_1} & = & 0\\
	x_{\sigma_1\sigma_2} & = & c_{\sigma_1\sigma_2}\\
	x_{\sigma_1\sigma_2\sigma_3} & = & c_{\sigma_1\sigma_2\sigma_3}\\
	&\vdots &\\
	x_{a\sigma\cdots\sigma_n} & = & c_{\sigma_1\cdots\sigma_n}.
\end{eqnarray*}
The vertex $v_\sigma$ of $\Pi_\mathcal{D}$ is then given explicitly as
$$v_\sigma = c_{\sigma_1\sigma_2}e_{\sigma_2} + c_{\sigma_1\sigma_2\vert\sigma_3}e_{\sigma_3}+\cdots c_{\sigma_1\sigma_2\cdots\sigma_{n-1}\vert\sigma_n}e_{\sigma_n}.$$
We claim that $\mathcal{Z}_\mathcal{D}$ has the same vertex set as $\Pi_D$; this will show that the convex hulls coincide.

The Minkowski sum is given explicitly as the image of the linear map
$$x:\lbrack0,1\rbrack ^{\binom{n}{2}}\rightarrow \mathcal{K}^n(\mathcal{D})$$
defined by
$$\mathbf{t} \mapsto \sum_{i=1}^n x_i(\mathbf{t}) e_i=\sum_{1\le i<j\le n} c_{ij}(t_{ij}e_i+ (1-t_{ij})e_j)$$
keeping in mind the convention $t_{ij}=t_{ji}$ and $c_{ij}=c_{ji}$.  We remind that $e_1,\ldots, e_n$ is the standard basis for $\mathbb{R}^n$.  We claim that  $x(\mathbf{t})$ is in the permutohedron $\mathcal{K}^n(\mathcal{D})$ for all $0\le t_{ij} \le1$.

Collecting coefficients we have
\begin{eqnarray}\label{eqn:parametrizationZonotope}
x_1(\mathbf{t}) & = & (c_{12}t_{12}+c_{13}t_{13}+\cdots+c_{1n}t_{1n})\nonumber\\
x_2(\mathbf{t}) & = & (c_{21}(1-t_{21})+c_{23}t_{23}+\cdots+c_{2n}t_{2n})\nonumber\\
x_3(\mathbf{t}) & = & (c_{31}(1-t_{31})+c_{32}(1-t_{32})+c_{34}t_{34}+\cdots+c_{3n}t_{3n})\\
& \vdots & \nonumber\\
x_n(\mathbf{t}) & = & (c_{n1}(1-t_{n1})+c_{n2}(1-t_{n2})+\cdots+c_{n(n-1)}(1-t_{n(n-1)})).\nonumber
\end{eqnarray}
Then
\begin{eqnarray*}
	x_J & = & \sum_{j\in J}\left(\sum_{i<j} (c_{ij}t_{ij}) + \sum_{i>j} (c_{ij}(1-t_{ij}))\right)\\
	& = & c_J + \sum_{j\in J}\left(\sum_{i<j;\ i\not\in J}c_{ij} t_{ij} + \sum_{i>j;\ i\not\in J}c_{ij}(1- t_{ij})\right)\\
	&\ge & c_{J},
\end{eqnarray*}
as $0\le t_{ij} \le 1$ for all $1\le i<j\le n$.  This shows that $\mathcal{Z}_\mathcal{D} \subseteq \Pi_\mathcal{D}$.  

For the inclusion $\mathcal{Z}_\mathcal{D} \supseteq \Pi_\mathcal{D}$, since both $\mathcal{Z}_\mathcal{D}$ and $\Pi_D$ are (convex) generalized permutohedra and $\Pi_D$ is the convex hull of its vertices, it suffices to check that every vertex of $\mathcal{Z}_\mathcal{D}$ is a vertex of $\Pi_\mathcal{D}$.

Let a permutation $\sigma=(\sigma_1,\ldots, \sigma_n)$ be given. Denote by
$$\mathcal{I}_\sigma=\{(i,j):\sigma_i>\sigma_j,\  1\le i<j\le n\}$$
the set of inversions of $\sigma$.  Set 
$$t_{ij}= 1,\text{ if } (i,j)\in\mathcal{I}_\sigma$$
and
$$t_{ij}= 0,\text{ if } (i,j)\not\in\mathcal{I}_\sigma.$$
Collect these values in a vector $\mathbf{t}$.  Then from Equation \eqref{eqn:parametrizationZonotope} it follows that 
$$x(\mathbf{t})=\sum_{j=1}^n \left(\sum_{i<j}c_{\sigma_i\sigma_j}\right)e_{\sigma_j}=c_{\sigma_1\sigma_2}e_{\sigma_2} + c_{\sigma_1\sigma_2\vert\sigma_3}e_{\sigma_3}+\cdots c_{\sigma_1\sigma_2\cdots\sigma_{n-1}\vert\sigma_n}e_{\sigma_n}$$
which is also the expression for $v_\sigma\in\Pi_\mathcal{D}$.
\end{proof}
\begin{cor}
	 The characteristic function of $\mathcal{Z}_\mathcal{D}$ equals the alternating sum:
		\begin{eqnarray}\label{eqn:alternating sum zonotopal}
		\lbrack\mathcal{Z}_\mathcal{D}\rbrack = \sum_{\mathbf{T}}(-1)^{n-\text{len}(\mathbf{T})}\lbrack\lbrack \mathbf{T}\rbrack\rbrack_\mathcal{D},
		\end{eqnarray}
		where $\mathbf{T}$ varies over all ordered set partitions of $\{1,\ldots, n\}$.
\end{cor}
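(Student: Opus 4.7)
The identity is an instance of the Brianchon--Gram theorem: for any convex polytope $P$ in an affine space,
\begin{equation*}
\lbrack P\rbrack=\sum_{F\subseteq P} (-1)^{\dim F}\lbrack T_F P\rbrack,
\end{equation*}
where the sum runs over all nonempty faces $F$ of $P$ and $T_F P$ denotes the tangent cone of $P$ along $F$, namely the intersection of the half-spaces defined by the facet inequalities of $P$ that are tight on $F$ (with $T_P P=\operatorname{aff}(P)$). The plan is to apply this to the polytope $\mathcal{Z}_\mathcal{D}=\Pi_\mathcal{D}$ built in Theorem~\ref{thm:zonotopal permutohedron proof 3} and to match the signs and tangent cones to the quantities appearing in \eqref{eqn:alternating sum zonotopal}.

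First I would identify the face lattice of $\mathcal{Z}_\mathcal{D}$ with the poset of ordered set partitions of $\{1,\ldots,n\}$: the face $F_\mathbf{T}$ labeled by $\mathbf{T}=(S_1,\ldots,S_k)$ is the intersection of $\mathcal{Z}_\mathcal{D}$ with the $k-1$ hyperplanes $s_{aS_1\cup\cdots\cup S_i}=0$, and has dimension $n-k$. This is a standard property of generalized permutohedra, and in our setting it is essentially immediate from the vertex description in the proof of Theorem~\ref{thm:zonotopal permutohedron proof 3}, where each $v_\sigma$ arises as the intersection of the $n-1$ flag hyperplanes indexed by the permutation $\sigma$. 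When every $c_{ij}>0$, the strict form of the supermodularity inequality derived in that proof guarantees that no other facet $s_{aJ}=0$ passes through $F_\mathbf{T}$, so the facets of $\mathcal{Z}_\mathcal{D}$ tight on $F_\mathbf{T}$ are exactly the flag facets $J_i=S_1\cup\cdots\cup S_i$ for $i=1,\ldots,k-1$.

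I would then conclude that the tangent cone $T_{F_\mathbf{T}}\mathcal{Z}_\mathcal{D}$ is cut out in $\mathcal{K}^n(\mathcal{D})$ by $s_{aJ_i}\ge 0$ for $i=1,\ldots,k-1$ together with the automatic identity $s_{a12\cdots n}=0$ from Proposition~\ref{prop:linear relations}, which is exactly the plate $\lbrack\lbrack\mathbf{T}\rbrack\rbrack_\mathcal{D}$. Inserting this identification and the sign $(-1)^{\dim F_\mathbf{T}}=(-1)^{n-k}=(-1)^{n-\text{len}(\mathbf{T})}$ into Brianchon--Gram yields Equation~\eqref{eqn:alternating sum zonotopal}.

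The main technical obstacle I anticipate is the degenerate case in which some $c_{ij}=0$, where several ordered set partitions may label the same face of $\mathcal{Z}_\mathcal{D}$ and the geometric tangent cone can be strictly smaller than the corresponding plate. I expect to handle this by a continuity argument: both sides of \eqref{eqn:alternating sum zonotopal} pair against any compactly supported test function on $\mathcal{K}^n(\mathcal{D})$ to give quantities that depend continuously on the parameters $c_{ij}$ in the closed region $\{c_{ij}\ge 0\}$, so the identity, which holds on the dense open subset where all $c_{ij}>0$, extends by a limiting argument to the boundary.
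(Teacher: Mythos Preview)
The paper states this corollary with no proof, evidently regarding it as an immediate consequence of Theorem~\ref{thm:zonotopal permutohedron proof 3} together with the Brianchon--Gram relation (the remark that follows explicitly reads the terms of \eqref{eqn:alternating sum zonotopal} as characteristic functions of tangent cones to faces of $\Pi_\mathcal{D}$). Your argument is therefore not a different route so much as the route the paper leaves to the reader, and for the generic case $c_{ij}>0$ it is correct as written: the zonotope then has the full braid fan as normal fan, its faces are in bijection with ordered set partitions, and the tangent cone along $F_\mathbf{T}$ is exactly the plate $\lbrack\lbrack\mathbf{T}\rbrack\rbrack_\mathcal{D}$ with the sign $(-1)^{n-\text{len}(\mathbf{T})}$.

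The one place to tighten is the limiting step. Pairing against compactly supported test functions yields equality \emph{almost everywhere}, whereas the corollary asserts pointwise equality of characteristic functions, in particular on the hyperplanes $s_{aJ}=0$. This is easily repaired. For each fixed $\mathcal{D}$ both sides are integer-valued and constant on the open cells of the arrangement $\{s_{aJ}=0:J\subsetneq\{1,\ldots,n\}\}$, so a.e.\ equality already gives equality on every open cell; at a boundary point $x$, since every inequality defining either side is non-strict, the value of each side at $x$ coincides with its value at $x+\varepsilon v$ for small $\varepsilon>0$ and any direction $v$ making all the functionals $s_{aJ}$ that vanish at $x$ strictly positive, which reduces to the open-cell case. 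Alternatively one can bypass the limit entirely: when faces of $\mathcal{Z}_\mathcal{D}$ coalesce, the plates $\lbrack\lbrack\mathbf{T}\rbrack\rbrack_\mathcal{D}$ indexed by the ordered set partitions refining a given true face still satisfy their own Brianchon--Gram identity summing to the genuine tangent cone, so the ordered-set-partition sum and the face sum agree term by term over the actual face lattice.
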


\begin{rem}
	Modulo characteristic functions of tangent cones to faces of dimension $\ge 1$, labeled by ordered set partitions $(S_1,\ldots, S_k)$ where at least one block is not a singlet, the expression in Equation \eqref{eqn:alternating sum zonotopal} of Theorem \ref{thm:zonotopal permutohedron proof 3} is a sum of $n!$ characteristic functions of tangent cones to vertices of $\Pi_\mathcal{D}$ all having the same sign $+1$, in alignment with the formula in Equation \eqref{eqn: canonical form} above, for the canonical form from Section 10 of \cite{worldsheet}.
\end{rem}

\begin{example}\label{example: zonotopal Permutohedron 1}
	Set $c_{i,j}=1$ for all $1\le i<j\le n$, so $d_i=i-1$.  Then the resulting plate $\lbrack 1_0 2_1 3_2,\ldots, n_{n-1}\rbrack$ is the permutohedral cone which is \textit{tangent} at the vertex $(0,1,2,\ldots, n-1)$ of the usual permutohedron obtained as the convex hull of permutations of $(0,1,2,\ldots, n-1)$:
	\begin{eqnarray*}
		x_1 & \ge & 0\\
		x_{12} & \ge & 1=\binom{2}{2}\\
		x_{123} & \ge & 1+2=\binom{3}{2}\\
		& \vdots &\\
		x_{12\cdots n} & = & 1+2+\cdots+(n-1)=\binom{n}{2}.
	\end{eqnarray*}
\end{example}

\begin{example}\label{example: zonotopal Permutohedron 2}
	In the limiting case $c_{i,j}\rightarrow 0$ for all $1\le i<j\le n$, then via the identification of $s_{ai}$ with $x_i$ we have 
	\begin{eqnarray*}
		x_1 & \ge & 0\\
		x_{12} & \ge & 0\\
		x_{123} & \ge & 0\\
		& \vdots &\\
		x_{12\cdots n} & = & 0,
	\end{eqnarray*}
	which determine the plate $\lbrack 1,2,\ldots, n\rbrack$, and we are in the setting of \cite{EarlyCanonicalBasis}.
\end{example}

	\begin{figure}[h!]
	\centering
	\includegraphics[width=0.55\linewidth]{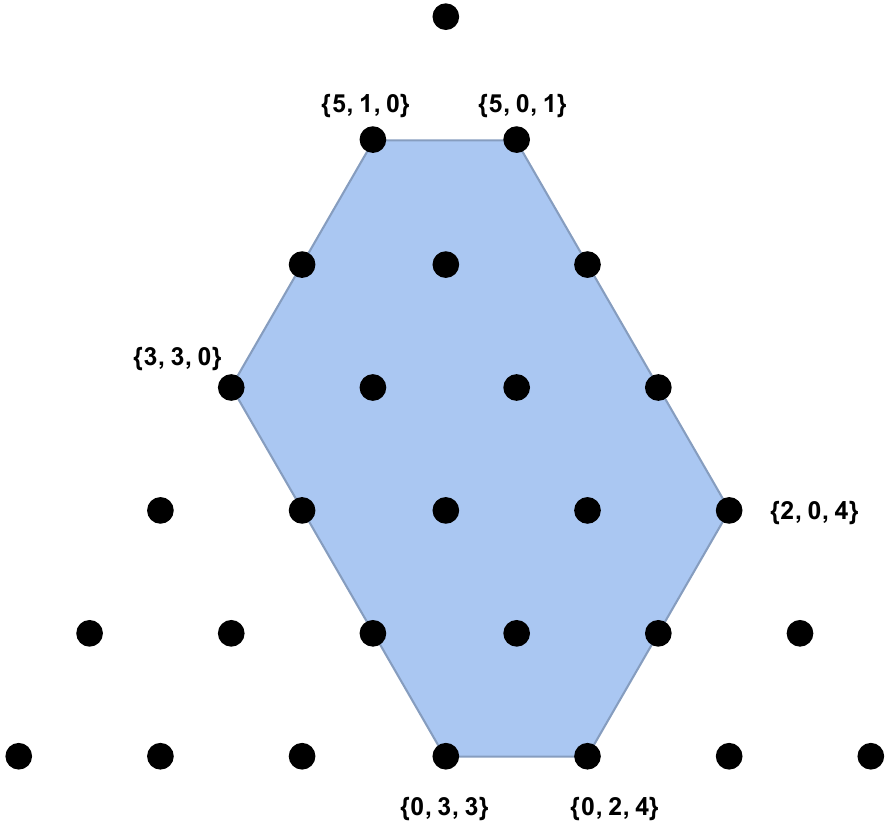}
	\caption{Zonotopal generalized permutohedron for Example \ref{example: zonotopal generalized permutohedron}, $(c_{12},c_{23},c_{13})=(2,1,3)$.  All edges parallel to $e_i-e_j$ have length $c_{ij}$.}
	\label{fig:zonotopalpermutohedron3coordinates}
	\end{figure}
	\begin{example}\label{example: zonotopal generalized permutohedron}
		Consider the case $\mathcal{K}^3$.  Let the matrix of constants $\mathcal{D}$ be given, with nonzero entries $c_{12}=c_{21}=2,\ c_{23}=c_{32}=1,\ c_{13}=c_{31}=3$.  The characteristic function of the zonotopal generalized permutohedron in Figure \ref{fig:zonotopalpermutohedron3coordinates} has the following expansion:
		$$\lbrack\lbrack 12_2\rbrack\rbrack \bullet \lbrack\lbrack 23_1\rbrack\rbrack\bullet \lbrack\lbrack 13_3\rbrack\rbrack $$
		$$=\lbrack\lbrack 1,2,3\rbrack\rbrack_{\mathcal{D}} + \lbrack\lbrack 2,1,3\rbrack\rbrack_{\mathcal{D}} + \lbrack\lbrack 2,3,1\rbrack\rbrack_{\mathcal{D}} + \lbrack\lbrack 3,2,1\rbrack\rbrack_{\mathcal{D}} + \lbrack\lbrack 3,1,2\rbrack\rbrack_{\mathcal{D}} + \lbrack\lbrack 1,3,2\rbrack\rbrack_{\mathcal{D}}$$
		$$-\left(\lbrack\lbrack 1,23\rbrack\rbrack_{\mathcal{D}} +\lbrack\lbrack 12,3\rbrack\rbrack_{\mathcal{D}} + \lbrack\lbrack 2,13\rbrack\rbrack_{\mathcal{D}} +\lbrack\lbrack 23,1\rbrack\rbrack_{\mathcal{D}} +\lbrack\lbrack 3,12\rbrack\rbrack_{\mathcal{D}} +\lbrack\lbrack 13,2\rbrack\rbrack_{\mathcal{D}}   \right)$$
		$$+\lbrack\lbrack 123\rbrack\rbrack_\mathcal{D}$$
		$$=\lbrack\lbrack 1_0 2_2 3_4\rbrack\rbrack + \lbrack\lbrack 2_0 1_2 3_4\rbrack\rbrack + \lbrack\lbrack 2_0 3_1 1_5\rbrack\rbrack + \lbrack\lbrack 3_0 2_1 1_5\rbrack\rbrack +\lbrack\lbrack 3_0 1_3 2_3\rbrack\rbrack + \lbrack\lbrack 1_0 3_3 2_3\rbrack\rbrack$$
		$$-\left(\lbrack\lbrack 1_0 23_6\rbrack\rbrack + \lbrack\lbrack 12_2 3_4\rbrack\rbrack +\lbrack\lbrack 2_0 13_6\rbrack\rbrack +\lbrack\lbrack 23_1 1_5\rbrack\rbrack + \lbrack\lbrack 3_0 12_6\rbrack\rbrack + \lbrack\lbrack 13_3 2_3\rbrack\rbrack \right)$$
		$$+\lbrack\lbrack 123_6\rbrack\rbrack,$$
	where for example $\lbrack\lbrack 1_0 2_2 3_4\rbrack\rbrack$ is the characteristic function of the cone at the vertex $(0,2,4)$ opening toward the upper left, cut out by the inequalities
	\begin{eqnarray*}
		x_1 & \ge & 0\\
		x_{12} & \ge & 0+2\\
		x_{123} & = & 0+2+4.
	\end{eqnarray*}
	The corresponding sum of fractions from \eqref{eqn: canonical form} in terms of generalized Mandelstam variables $s_J$ is
	$$\frac{1}{s_{a1}s_{a12}} + \frac{1}{s_{a1}s_{a13}} + \frac{1}{s_{a2}s_{a21}} + \frac{1}{s_{a2}s_{a23}} + \frac{1}{s_{a3}s_{a31}} + \frac{1}{s_{a3}s_{a32}}$$
\begin{small}
$$=\frac{1}{(x_1)(x_{12}-2)} + \frac{1}{(x_1)(x_{13}-3)} + \frac{1}{(x_2)(x_{21}-2)} + \frac{1}{(x_2)(x_{23}-1)} + \frac{1}{(x_3)(x_{31}-3)} + \frac{1}{(x_3)(x_{32}-1)},$$
\end{small}	
where we recall that $x_{ij} = x_i+x_j$.

\end{example}

\begin{example}
See Figure \ref{fig:pointexplodedhexagon1} for the explosion of a point to a hexagon, as the Minkowski sums $\lbrack 12_\varepsilon\rbrack + \lbrack 13_\varepsilon\rbrack+\lbrack 23_\varepsilon\rbrack = \left\{u_{12}+u_{13}+u_{23}:u_{ij}\in\lbrack ij_\varepsilon\rbrack\right\}$ from Theorem \ref{thm:zonotopal permutohedron proof 3} for (small) $\varepsilon\ge 0$.
\begin{figure}[h!]
	\centering
	\includegraphics[width=0.4\linewidth]{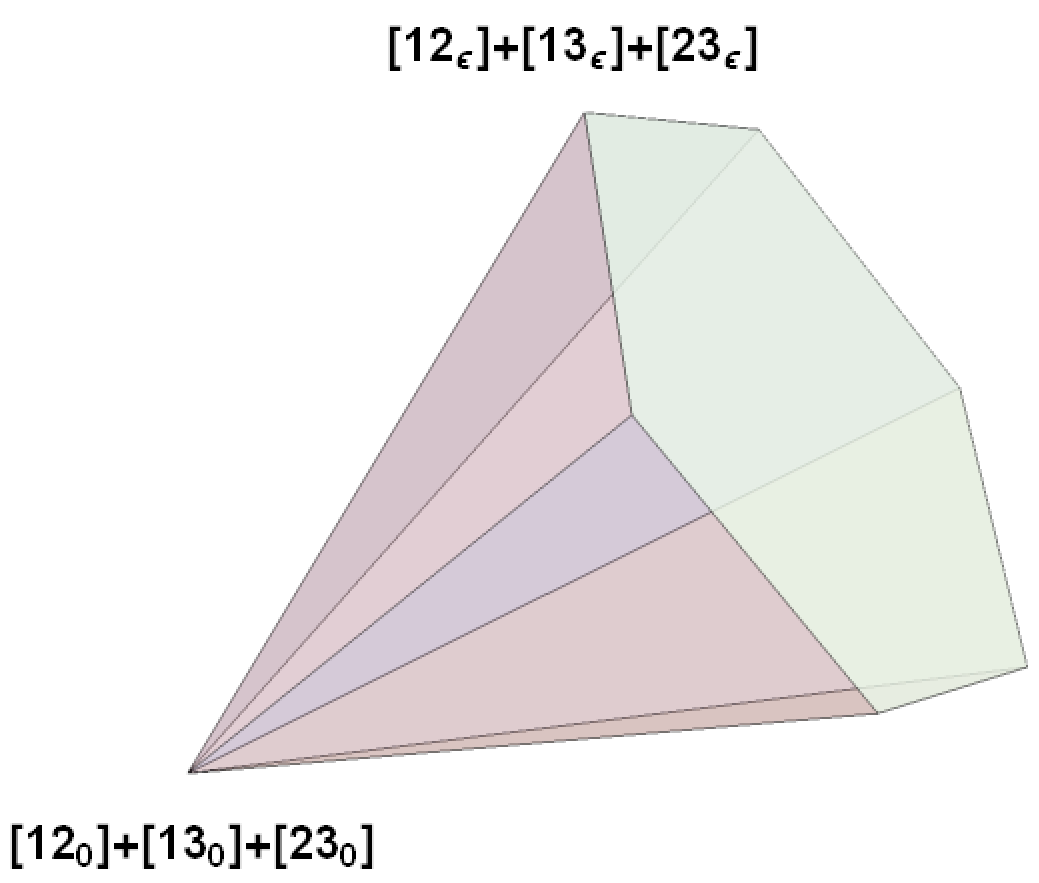}
	\caption{Exploding a point to a hexagon: $(c_{12},c_{13},c_{23})=(\varepsilon,\varepsilon,\varepsilon)$ for $\varepsilon\ge 0$}
	\label{fig:pointexplodedhexagon1}
\end{figure}
When $\varepsilon>0$ the expression does not have a canonical simplification,
\begin{small}
	$$\frac{1}{x_2 \left(x_1+x_2-\epsilon \right)}+\frac{1}{x_1 \left(x_1+x_3-\epsilon \right)}+\frac{1}{x_3 \left(x_1+x_3-\epsilon \right)}+\frac{1}{x_2 \left(x_2+x_3-\epsilon \right)}+\frac{1}{x_3 \left(x_2+x_3-\epsilon \right)}+\frac{1}{x_1 \left(x_1+x_2-\epsilon \right)},$$
but in the limit $\varepsilon\rightarrow0$ we have
\begin{small}
	$$\frac{1}{x_2 \left(x_1+x_2\right)}+\frac{1}{x_1 \left(x_1+x_3\right)}+\frac{1}{x_3 \left(x_1+x_3\right)}+\frac{1}{x_2 \left(x_2+x_3\right)}+\frac{1}{x_3 \left(x_2+x_3\right)}+\frac{1}{x_1 \left(x_1+x_2\right)}=\frac{x_1+x_2+x_3}{x_1 x_2 x_3}=0,$$
\end{small}
since the numerator vanishes identically.  This can also be seen from general theory, as higher codimension cones (in particular, here, the point at the origin) are in the kernel of the valuation induced by the integral Laplace transform.
\end{small}

\end{example}

\section{Generalized associahedra in the kinematic space}\label{sec: kinematic associahedron}

The standard realization of the associahedron was given, see \cite{Loday}, in terms of facet inequalities, as
$$\left\{x\in\mathbb{R}^n:\ \sum_{i=1}^n x_i=\binom{n}{2},\ \sum_{i\in I}x_i \ge \binom{\vert I\vert +1}{2}: I=\lbrack a,b\rbrack\subsetneq \{1,\ldots, n\} \right\},$$
where $\lbrack a,b\rbrack$ runs over all proper subintervals of $\{1,\ldots, m\}$ with $b-a\ge 2$.

Having a binomial coefficients for the vertex coordinates begs the question whether they are counting some quantity.  We show in what follows that they are counting the Mandelstam parameters $s_{ij}$ for all $i\not\in\{j-1,j,j+1\}$, specialized to the hyperplane where $s_{ij}= -c_{ij}=-1$.

The following construction in the kinematic for an arbitrary graph appeared first in \cite{HeSong2}, where the resulting generalized permutohedron is called a \textit{Cayley} polytope.

Let a collection of constants 
$$\mathcal{D}=\{c_{ij}=c_{ji}\ge 0:1\le i,j\le n+1,\ i\not\in\{j-1,j,j+1\}\}$$
be given.  Define 
$$\mathcal{A}^{n}(\mathcal{D})=\left\{(s)\in\mathcal{K}^{n}:\ s_{ij}=-c_{ij}\text{ whenever } i\not\in\{j-1,j,j+1\};\  s_{\lbrack a,b\rbrack }\ge 0,\ i,j,a,b=1,\ldots, n+1 \right\},$$
where we use the notation $\lbrack a,b\rbrack=\{a,a+1,\ldots, b\}$.
\begin{prop}
The set $\mathcal{A}^n(\mathcal{D})$ is a generalized permutohedron, and can be expressed as a Minkowski sum, as 
$$\{\lbrack (i\ i+1\cdots j-1\ j)_{c_{i,j+1}}\rbrack: 1\le i<j\le n\},$$
where $\lbrack (i\ i+1\cdots j-1\ j)_{c_{i,j+1}}\rbrack$ is the dilated simplex which is given by the convex hull of the set of vertices $\{c_{i,j+1}e_i,c_{i,j+1}e_{i+1},\ldots,c_{i,j+1}e_{j-1}, c_{i,j+1}e_j\}$.  In the case when all constants $c_{i,j+1}$ tend to $1$, then we recover exactly the usual associahedron.
\end{prop}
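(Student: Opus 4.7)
The plan is to mimic the argument of Theorem \ref{thm:zonotopal permutohedron proof 3}, with proper subintervals $[a,b]\subsetneq\{1,\ldots,n+1\}$ replacing arbitrary subsets and with the building blocks now being the dilated simplices $c_{i,j+1}\Delta_{[i,j]}$ rather than the root segments. First I would introduce the coordinates $y_l=s_{l,l+1}$ for $l=1,\ldots,n$, the only free Mandelstam variables once the constraints $s_{ij}=-c_{ij}$ are imposed for $|i-j|\ge 2$. Splitting $s_{[a,b]}=\sum_{l<k\in[a,b]}s_{l,k}$ into its adjacent and non-adjacent pieces, the inequality $s_{[a,b]}\ge 0$ becomes
$$\sum_{l=a}^{b-1} y_l\ \ge\ C_{[a,b]},\qquad C_{[a,b]}:=\sum_{\substack{l<k\in[a,b]\\ k-l\ge 2}}c_{l,k},$$
while the momentum-conservation argument of Proposition \ref{prop:linear relations} forces $\sum_{l=1}^n y_l=C_{[1,n+1]}$.

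Next I would compute the support function of the proposed Minkowski sum $M=\sum_{1\le i<j\le n}c_{i,j+1}\Delta_{[i,j]}$ in the direction $-\mathbf{1}_J$. By linearity of Minkowski summation, together with the observation that $\min_{v\in\Delta_{[i,j]}}\sum_{l\in J}v_l$ equals $1$ if $[i,j]\subseteq J$ and $0$ otherwise, one obtains
$$\min_{y\in M}\sum_{l\in J}y_l\ =\ \sum_{(i,j)\,:\,[i,j]\subseteq J}c_{i,j+1}.$$
Specializing to $J=[a,b-1]$ and reindexing $(l,k)=(i,j+1)$ turns the right-hand side into $C_{[a,b]}$, matching the $\mathcal{A}^n(\mathcal{D})$ inequality on the nose; hence $M\subseteq\mathcal{A}^n(\mathcal{D})$, and the same supermodular interval data govern both polytopes (which in particular already shows that $\mathcal{A}^n(\mathcal{D})$ is a generalized permutohedron, by Postnikov's theorem applied to $M$).

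For the reverse containment I would follow the vertex-matching strategy of Theorem \ref{thm:zonotopal permutohedron proof 3}. Vertices of $\mathcal{A}^n(\mathcal{D})$ arise by saturating a maximal compatible system $s_{[a_r,b_r]}=0$ for $r=1,\ldots,n-1$, and such maximal compatible nested/disjoint families of intervals are in classical bijection with triangulations of a convex $(n+2)$-gon, equivalently with planar binary trees on $n$ internal nodes. For each such $T$, solving the resulting linear system produces a vertex whose $l$-th coordinate is an explicit sum of dilations $c_{i,j+1}$ indexed by the diagonals of $T$ that fall on $l$; inspection shows this agrees with the Minkowski vertex of $M$ obtained by picking, from each summand $c_{i,j+1}\Delta_{[i,j]}$, the basis vector $e_k$ prescribed by $T$. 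The two vertex sets then coincide, giving $\mathcal{A}^n(\mathcal{D})=M$. Letting every $c_{i,j+1}=1$ reduces $M$ to the classical Loday realization $\sum_{1\le i<j\le n}\Delta_{[i,j]}$, establishing the last sentence. The principal obstacle is precisely this combinatorial step: keeping the bookkeeping straight between maximal nested chains of intervals, triangulations of the $(n+2)$-gon, and the simplex-vertex assignment in the Minkowski sum; once that correspondence is written out, the rest of the argument is the linear algebra already carried out in Theorem \ref{thm:zonotopal permutohedron proof 3}.
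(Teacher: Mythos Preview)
Your approach is sound but takes a genuinely different route from the paper. The paper's proof is a one-line citation: it invokes Proposition~7.5 of \cite{PostnikovPermutohedra}, which identifies the generalized permutohedron $P_n(\{z_I\})$ with the Minkowski sum $\sum_I y_I\Delta_I$, and then simply reads off that the dilation parameters for contiguous subsets are $y_{[i,j]}=c_{i,j+1}$ while $y_J=0$ for non-contiguous $J$. That is the entire argument.

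Your strategy is the hands-on analogue of Theorem~\ref{thm:zonotopal permutohedron proof 3}: compute the support function of the Minkowski sum $M$ in the interval directions, verify it matches the right-hand sides $C_{[a,b]}$, and then match vertices. The support-function half is clean and correct. What you buy is a self-contained argument that does not lean on the general Minkowski-sum machinery of \cite{PostnikovPermutohedra}; what you pay is the combinatorial bookkeeping you flag at the end.

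One point deserves care. Your parenthetical claim that ``the same supermodular interval data govern both polytopes (which in particular already shows that $\mathcal{A}^n(\mathcal{D})$ is a generalized permutohedron, by Postnikov's theorem applied to $M$)'' is premature at that stage. Knowing that $M$ is a generalized permutohedron and that its \emph{interval} support values agree with the defining inequalities of $\mathcal{A}^n(\mathcal{D})$ does not yet tell you that the non-interval inequalities $\sum_{l\in J}y_l\ge z_J$ are redundant for $\mathcal{A}^n(\mathcal{D})$; that is exactly what $M=\mathcal{A}^n(\mathcal{D})$ asserts. So the generalized-permutohedron status of $\mathcal{A}^n(\mathcal{D})$ really does wait for the vertex-matching step. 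In that step you also need the (true, but not entirely free) fact that at a vertex of $\mathcal{A}^n(\mathcal{D})$ the tight interval inequalities form a \emph{non-crossing} family --- i.e.\ that two crossing intervals cannot be simultaneously tight without forcing a violation of a third interval inequality. Once that is checked, the bijection with triangulations and the simplex-vertex assignment go through as you describe, and the argument closes.
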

\begin{proof}[Sketch of proof]
	This follows from Proposition 7.5 in \cite{PostnikovPermutohedra}, where the dilation parameter $y_J$ for each contiguous subset $\{i,i+1,\ldots, j\}$ is now $c_{i,j+1}$, and where $y_J=0$ for all non-contiguous subsets $J\subset\{1,\ldots, m\}$.
\end{proof}

It is easy to check verify from the definition of $\mathcal{A}^n$, using the property $s_{I} = s_{I^c}$ for $I\subsetneq$ a nonempty subset of $\{1,\ldots, n+2\}$, that the kinematic space has a natural action of $\mathbb{Z}\slash (n+2)\mathbb{Z}$ which preserves the set of equations which cut out the $(n-1)$-dimensional associahedron. 

Indeed, then the pointwise action on the associahedron by $\sigma=(12\cdots n)$ 
becomes
\begin{eqnarray*}
	& & (s_{12},s_{23},\ldots, s_{n-2,n-1})\mapsto (s_{23}, s_{34},\ldots, s_{n-1,n}) = (s_{23}, s_{34},\ldots, s_{n-2,n-1},s_{12\cdots n-2})\\
	& = & \left(s_{23},s_{34},\ldots, s_{n-2,n-1}, \sum_{i=1}^{n-2}s_{i,i+1} - \sum_{1\le i<j-1\le n-3} c_{ij}\right),
\end{eqnarray*}

where the sum is over the constants $s_{i,j} = -c_{i,j}$ having nonadjacent indices $i<j-1\le n-3$.

This gives rise to Proposition \ref{prop:cyclic action associahedron}.

\begin{prop}\label{prop:cyclic action associahedron}
	The action of the $(n+2)$-cycle $\sigma=(12\cdots (n+2))$ on the kinematic space $\mathcal{K}^n$ preserves the natural embedding of the $(n-1)$-dimensional associahedron.
\end{prop}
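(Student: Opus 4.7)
The plan is to show that the cyclic shift $s_{ij}\mapsto s_{\sigma(i)\sigma(j)}$ on $\mathcal{K}^n$ permutes the defining equations and inequalities of $\mathcal{A}^n(\mathcal{D})$ as a set; consequently $\sigma(\mathcal{A}^n(\mathcal{D}))=\mathcal{A}^n(\sigma\cdot\mathcal{D})$, which reduces to $\mathcal{A}^n(\mathcal{D})$ when $\mathcal{D}$ is chosen cyclically symmetric. The essential tool is the identity $s_I=s_{I^c}$ on $\mathcal{K}^n$, which follows from the momentum-conservation constraints $\sum_{j\neq i}s_{ij}=0$ and was already used (in the instance $I=\{a,1,\ldots,n\}$) in Proposition~\ref{prop:linear relations}; this identity lets us treat the defining inequalities, originally indexed by linear subintervals of $\{1,\ldots,n+1\}$, as a cyclically closed family.

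First I would identify $\{a,1,\ldots,n,b\}$ with $\mathbb{Z}/(n+2)\mathbb{Z}$ and record the induced action $s_I\mapsto s_{\sigma(I)}$ on all Mandelstam invariants. For each contiguous interval $[i,j]$, its image $\sigma([i,j])$ is either again a linear interval, in which case the inequality $s_{[i,j]}\geq 0$ is preserved verbatim, or it wraps around the cyclic index, in which case its complement $\sigma([i,j])^c$ is a linear interval and the identity $s_I=s_{I^c}$ rewrites $s_{\sigma([i,j])}$ as $s_J$ for that $J$. The prototypical wraparound calculation is already spelled out in the text immediately preceding the proposition, where the last entry of the shifted tuple, $s_{n-1,n}$, is expressed via the linear relations as $s_{12\cdots n-2}$ modulo the fixed constants; the general case proceeds by the same mechanism.

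Next I would handle the equations $s_{ij}=-c_{ij}$ for non-adjacent $i,j$. Since non-adjacency $i\notin\{j-1,j,j+1\}$ is intrinsically a cyclic notion, $\sigma$ manifestly sends non-adjacent pairs to non-adjacent pairs, so the system of constant-valued equations is permuted, with the constants relabeled by $c_{ij}\mapsto c_{\sigma^{-1}(i)\sigma^{-1}(j)}$. Combined with the previous step, $\sigma$ bijects the two systems of defining relations, hence bijects the corresponding solution polytopes; for cyclically invariant $\mathcal{D}$ the image is $\mathcal{A}^n(\mathcal{D})$ itself.

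The hard part is the bookkeeping for wraparound: for each linear interval $[i,j]$, one must identify the unique linear interval $J$ (either $\sigma([i,j])$ itself, or its complement) such that $s_{\sigma([i,j])}=s_J$ on $\mathcal{K}^n$, and then verify that the resulting equation or inequality indeed appears among the defining relations of $\mathcal{A}^n(\sigma\cdot\mathcal{D})$. This reduces to a finite combinatorial check whose flavor is illustrated exactly by the calculation in the paragraph preceding the proposition, and no further analytic input is required.
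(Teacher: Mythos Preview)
Your proposal is correct and follows essentially the same approach as the paper: the paper's argument (given in the paragraph immediately preceding the proposition rather than in a separate proof environment) also rests on the identity $s_I=s_{I^c}$ to show that the cyclic shift permutes the defining interval inequalities, together with the observation that non-adjacency is a cyclic notion so that the constant equations are likewise permuted. Your write-up is in fact more careful than the paper's sketch, in particular by making explicit that $\sigma(\mathcal{A}^n(\mathcal{D}))=\mathcal{A}^n(\sigma\cdot\mathcal{D})$ and that set-wise invariance requires a cyclically symmetric $\mathcal{D}$.
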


\begin{example}
	In the case $m=5$ we have variables $(x_1,x_2,x_3) = (s_{12},s_{23},s_{34})$, where $x_{1}+x_2+x_3$ will be constant.  In terms of the Mandelstam variables, the 2-dimensional associahedron is cut out by the inequalities
	$$s_{12},s_{23}, s_{34},s_{123},s_{234} \ge 0.$$
	The inequalities defining the tangent cones at the five vertices are labeled by nesting intervals, as
	$$(s_{12},s_{123}\ge 0,\ \ s_{23},s_{123}\ge 0,\ \ s_{23},s_{234}\ge 0,\ \ s_{34},s_{234}\ge 0,\ \ s_{12},s_{34}\ge 0).$$
	These map termwise under $\sigma=(12345)$ to
	$$(s_{23},s_{234}\ge 0,\ \ s_{34},s_{234}\ge 0,\ \ s_{34},s_{12}\ge 0,\ \ s_{123},s_{12}\ge 0,\ \ s_{23},s_{123}\ge 0),$$
	having used the relations $s_{345}=s_{12}$ and $s_{45} = s_{123}$ to remove the index $5$.
	Then, in the $x$-coordinates this becomes
	\begin{eqnarray*}
		x_{1} & \ge & 0\\
		x_2 & \ge & 0\\
		x_3 & \ge & 0\\
		x_{1}+x_2 & \ge & c_{13}\\
		x_{2}+x_3 & \ge & c_{24}\\
		x_{1}+x_2+x_3 & = & c_{13}+c_{24} +c_{14},
	\end{eqnarray*}
	where the last line holds identically in the kinematic space.  The vertices are given by 
	$$\{(0,c_{13},c_{24}+c_{14}),(0,c_{13}+c_{24} + c_{14},0),(c_{13},0,c_{24}+c_{14}),(c_{13}+c_{14},0,c_{24}),(c_{13}+c_{14},c_{24},0)\}.$$
	This is the Minkowski sum of a triangle of edge length 3, and two line segments.  In terms of characteristic functions, using the convolution product we have 
	$$\lbrack \lbrack 123_{c_{14}}\rbrack\rbrack \bullet  \lbrack \lbrack 12_{c_{13}}\rbrack \rbrack \bullet \lbrack\lbrack 23_{c_{24}}\rbrack,$$
	where the triangle and two lines are given by 
	$$\lbrack \lbrack 123_{c_{14}}\rbrack\rbrack = \left\{x\in \lbrack 0,c_{14}\rbrack^3: \sum_{i=1}^3x_i=c_{14}\right\},$$
	$$\lbrack \lbrack 12_{c_{13}}\rbrack\rbrack = \left\{x\in \lbrack 0,c_{13}\rbrack^3: \sum_{i=1}^3x_i=c_{13},\  x_3=0\right\},$$
	$$\lbrack \lbrack 23_{c_{24}}\rbrack\rbrack = \left\{x\in \lbrack 0,c_{24}\rbrack^3: \sum_{i=1}^3x_i=c_{24},\  x_1=0\right\}.$$
	
	In the case that $(c_{13},c_{24},c_{14}) = (1,1,1)$ we recover the usual associahedron, as a generalized permutohedron, see Figure \ref{fig:associahedron}.
	\begin{figure}[h!]
		\centering
		\includegraphics[width=0.45\linewidth]{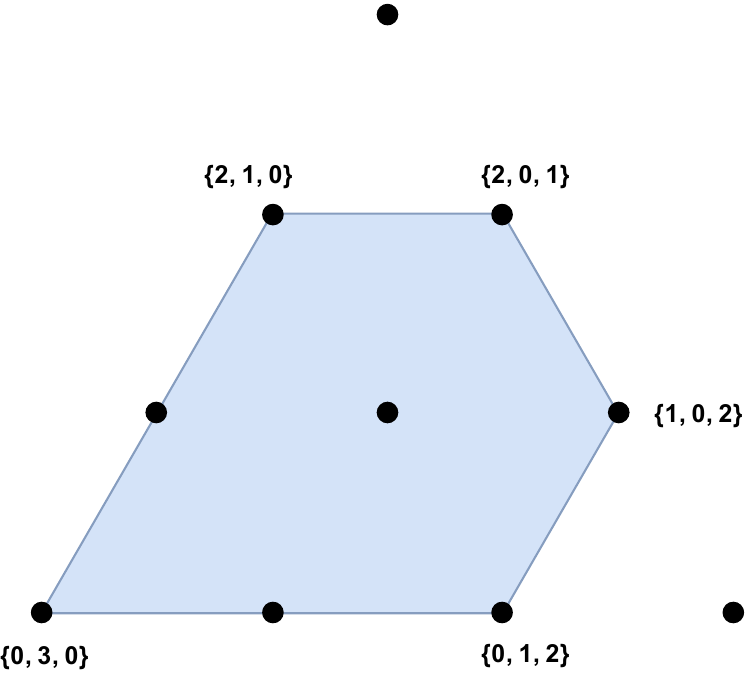}
		\caption{}
		\label{fig:associahedron}
	\end{figure}
	
	For the case that $(c_{13},c_{24},c_{14}) = (2,1,3)$ see see Figure \ref{fig:associahedron2}.
	\begin{figure}[h!]
		\centering
		\includegraphics[width=0.45\linewidth]{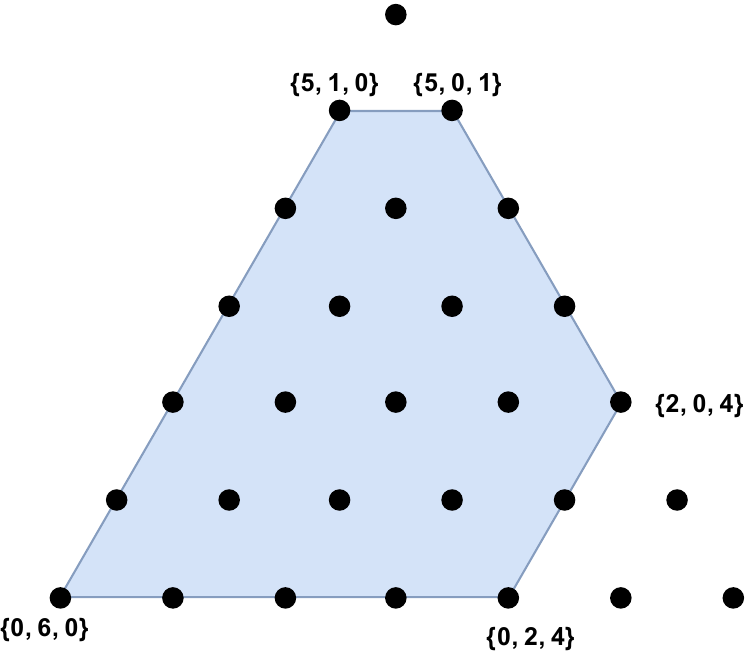}
		\caption{}
		\label{fig:associahedron2}
	\end{figure}
\end{example}

\newpage
\section{Triangulations of permutohedral cones and the associahedron}\label{sec: Appendix triangulations dual associahedra}
Let us fix $m=n+1$.

We here illustrate the \textit{tree} triangulation of the plate $\lbrack 1,2,\ldots, m\rbrack$, which we recall from \cite{EarlyCanonicalBasis} can be expressed as the conical hull
$$\lbrack 1,2,\ldots, m\rbrack = \langle e_1-e_2,\ldots, e_{m-1}-e_{m}\rangle_+ := \left\{t_1(e_1-e_2)+\cdots +t_{m-1}(e_{m-1}-e_{m}):t_i\ge 0\right\}.$$

Fix an order $(1,\ldots, m)$ for $m$ vertices on the line.  Following \cite{GelfandGraevPostnikov}, we see that the set of (unlayered) binary trees growing up from a given root, having $1,\ldots, m$ as leaves, are in an obvious bijection with directed trees with edges 
$$\left\{(i_1,j_1),\ldots, (i_{m-2},j_{m-2}),(1,m)\right\},$$
such that the intervals $\{i_a,i_{a}+1,\ldots, j_a\}$ are either nested or disjoint, and where $(1,m)$ is always an edge.  See Figure \ref{fig:triangulation-standard-plate-dual-associahedron-2} for the case $m=4$.
\begin{defn}
Denote by $\text{Trees}^{m}$ the set of all such trees with leaves in the order $(1,\ldots, m)$.  Call a \textit{partial tree} a directed graph which is obtained by removing a subset of the edges 
$$\{(i_1,j_1),\ldots, (i_{m-2},j_{m-2})\}$$
from a tree $\{(i_1,j_1),\ldots, (i_{m-2},j_{m-2}),(1,m)\}\in\text{Trees}^{m}$.  Denote by $\text{Tree}_{\le}^n$ the set of all partial trees.
\end{defn}

\begin{prop}
	The set of simplicial cones 
	$$\left\{\langle(i_1,j_1),\ldots, (i_{m-2},j_{m-2}),(1,m)\rangle_+: \{(i_1,j_1),\ldots, (i_{m-2},j_{m-2}),(1,m)\}\in\text{Trees}^{m} \right\}$$
	decomposes $\langle e_1-e_2,\ldots, e_{m-1}-e_{m}\rangle_+$ into $C_{m-1}=\frac{1}{m}\binom{2(m-1)}{m-1}$ simplicial cones which have disjoint interiors, where $C_a$ is the $a^\text{th}$ Catalan number.
\end{prop}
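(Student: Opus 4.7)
The plan is to proceed by induction on $m$, exploiting the recursive structure of $\text{Trees}^m$: every such tree is uniquely determined by the position $k\in\{1,\ldots,m-1\}$ of the split immediately below the outer edge $(1,m)$, together with a subtree $T_L\in\text{Trees}^k$ on $\{1,\ldots,k\}$ and a subtree $T_R\in\text{Trees}^{m-k}$ on $\{k+1,\ldots,m\}$.  The Catalan enumeration is immediate from this, since $|\text{Trees}^m|=\sum_{k=1}^{m-1}|\text{Trees}^k|\cdot|\text{Trees}^{m-k}|$ with base case $|\text{Trees}^2|=1$ is precisely the Catalan recurrence giving $C_{m-1}$.

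For \emph{simpliciality}, I would prove by induction that the $m-1$ vectors $\{e_i-e_j:(i,j)\in T\}$ are linearly independent.  After the first split, the $T_L$-vectors lie in $\operatorname{span}(e_1,\ldots,e_k)$ and the $T_R$-vectors in $\operatorname{span}(e_{k+1},\ldots,e_m)$, and each group has coordinate-sum zero on its respective block.  The additional vector $e_1-e_m$ has coordinate-sum $+1$ on the first block and $-1$ on the second, so it lies outside the span of $T_L\cup T_R$, producing $m-1$ independent vectors.  Containment of each tree cone in the root cone $\langle e_1-e_2,\ldots,e_{m-1}-e_m\rangle_+$ is then elementary via the telescoping $e_i-e_j=\sum_{\ell=i}^{j-1}(e_\ell-e_{\ell+1})$.

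The heart of the proof is the joint verification of \emph{covering} and \emph{disjoint interiors}, which I would obtain from an explicit ``find-the-minimum'' algorithm realizing a bijection between the interior of the root cone and pairs consisting of a tree $T$ together with a strictly positive coefficient vector on its edges.  Given $x=\sum_{i=1}^{m-1}t_i(e_i-e_{i+1})$ with all $t_i>0$, let $k^*=\arg\min_i t_i$, assign coefficient $t_{k^*}$ to the edge $(1,m)$, and subtract $t_{k^*}(e_1-e_m)$ from $x$.  Since $e_1-e_m$ contributes $+1$ to each simple-root coefficient, the remainder has simple-root coefficients $t_i-t_{k^*}\ge 0$, vanishing at $i=k^*$; consequently the remainder splits cleanly as $x_L+x_R$ with $x_L$ supported on $\{1,\ldots,k^*\}$ and $x_R$ supported on $\{k^*+1,\ldots,m\}$, each in the corresponding smaller root cone.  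Recursing on each block produces a unique tree $T$ and a unique strictly positive coefficient vector for generic $x$, establishing simultaneously that $x$ lies in a unique open tree cone and that the open tree cones cover the open root cone; the closed statement follows by continuity.

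The main obstacle is the bookkeeping that guarantees this recursive algorithm is consistent with the cone structure: one must verify that the coefficients produced on the edges of $T$, when expanded back via telescoping into simple roots, reconstruct the original coordinates $t_i$.  I would handle this inductively, using the fact that the single subtraction of $t_{k^*}(e_1-e_m)$ lowers every simple-root coefficient by the same amount $t_{k^*}$, reducing the problem to two strictly smaller instances on disjoint index blocks.  The $C_{m-1}$ count then falls out automatically from the Catalan recursion, and the non-generic locus (ties among the $t_i$) is swept under the ``disjoint interiors'' clause by observing that it is a finite union of hyperplane sections of strictly smaller dimension.
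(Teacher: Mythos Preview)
Your argument is correct, and it is a genuinely different route from the paper's.  The paper does not give a self-contained proof at all: it simply observes that the statement is the conical analogue of Theorem~6.3 in \cite{GelfandGraevPostnikov} (the triangulation of the convex hull of positive roots by binary trees), and invokes that result directly.  Your approach, by contrast, supplies an explicit inductive algorithm---the ``find-the-minimum'' splitting---that simultaneously establishes simpliciality, covering, and disjointness of interiors without relying on the cited literature.  The key insight you exploit, that for $x$ in the interior of the tree cone with first split at $k$ one has $t_k=\min_\ell t_\ell=c_{1m}$ (because $(1,m)$ is the only tree edge straddling position $k$, while every other position is straddled by at least one additional edge from the relevant subtree), is exactly what makes the recursion well-defined and bijective on the generic locus.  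What your approach buys is a fully elementary and self-contained argument, together with an algorithmic interpretation of the triangulation; what the paper's approach buys is brevity and a direct link to the existing combinatorial literature on root polytopes.  Both are valid; yours would in fact serve as an independent proof of the cited theorem.
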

\begin{proof}
This follows by a slight extension of Theorem 6.3 in \cite{GelfandGraevPostnikov}, replacing the \textit{convex} hull of the positive roots with their \textit{conical} hull 
$$\langle \{e_i-e_j:1\le i<j\le m\}\rangle_+=\langle e_1-e_2,\ldots, e_{m-1}-e_{m}\rangle_+ = \lbrack 1,2,\ldots, m\rbrack.$$
\end{proof}

\begin{example}\label{example: triangulation n=3}
	With $m=4$ we have the triangulation
	$$\langle e_1-e_2,e_2-e_3\rangle_+ = \langle e_1-e_3,e_2-e_3\rangle_+\cup \langle e_1-e_2,e_1-e_3\rangle_+.$$
	Note that this provides a nice interpretation of the fundamental rational function identity
	$$\frac{1}{(y_1-y_2)(y_2-y_3)} = \frac{1}{(y_1-y_3)(y_2-y_3)} + \frac{1}{(y_1-y_2)(y_1-y_3)},$$
	where the common boundary line has been ignored.  This could be interpreted as an identity among canonical forms of positive geometries.
\end{example}

Then, there is a natural duality between the triangulated plate $\lbrack 1,2,\ldots, m\rbrack$ and the $(m-2)$-dimensional associahedron. 

\begin{prop}
	The face poset of the set of simplicial cones in the tree triangulation of the standard plate $\pi_0=\langle e_1-e_2,\ldots, e_{m-1}-e_m\rangle_+$ is in duality with the set of tangent cones to the associahedron.
\end{prop}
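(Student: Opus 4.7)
The plan is to exhibit an explicit order-reversing bijection between the face poset of the tree triangulation of $\pi_0$ and the poset of tangent cones to faces of the $(m-2)$-dimensional associahedron, via the classical dictionary identifying contiguous subsets of $\{1,\ldots,m\}$ with diagonals of an $(m+1)$-gon, and hence with the facets of the Loday realization recalled at the start of Section~\ref{sec: kinematic associahedron}.

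First I would set up the combinatorial dictionary. To each edge $(i,j)$ appearing in some $T \in \text{Trees}^m$ associate the contiguous block $[i,j] \subseteq \{1,\ldots,m\}$, and in turn the facet of the associahedron cut out by $\sum_{k \in [i,j]} x_k = \binom{j-i+1}{2}$; the distinguished edge $(1,m)$ corresponds to the ambient defining hyperplane $\sum_{k=1}^m x_k = \binom{m}{2}$. The nesting-or-disjointness requirement in the definition of $\text{Trees}^m$ translates, under this dictionary, precisely to the non-crossing condition on diagonals of the $(m+1)$-gon, which is the standard combinatorial description of compatible facets of the associahedron. Hence every partial tree $T \in \text{Tree}_{\le}^m$ determines a unique face $F_T$ as the intersection of the associated facets; complete trees give vertices, while the minimal partial tree $\{(1,m)\}$ gives the associahedron itself.

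Next I would verify the duality of face posets. If $T \subseteq T'$ as edge sets, then on the triangulation side $\sigma_T$ is a face of $\sigma_{T'}$, so $\sigma_T \subseteq \sigma_{T'}$; on the associahedron side, $T'$ selects more facets, so $F_{T'} \subseteq F_T$, and by the standard description of tangent cones as the intersection of the half-spaces of the facets containing the base face, we obtain $\operatorname{TanCone}(F_{T'}) \subseteq \operatorname{TanCone}(F_T)$. Thus the assignment $\sigma_T \mapsto \operatorname{TanCone}(F_T)$ is an order-reversing bijection between the two face posets, which is exactly the required duality. Bijectivity follows because every face of the associahedron arises from a unique maximal non-crossing collection of diagonals, and every such collection extends to at least one triangulation.

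The principal obstacle is the purely combinatorial translation between the nesting-or-disjointness condition on intervals $[i_a, j_a]$ (living on the line) and the non-crossing condition on chords of the $(m+1)$-gon (living on a cycle). Once one fixes the cyclic embedding so that an interval $[i,j]$ on the line corresponds to the chord separating $\{i,\ldots,j\}$ from its complement in the polygon vertex set, the match is routine, and the counts agree with the previous proposition giving $C_{m-1}$ maximal simplicial cones, which equals the number of vertices of the $(m-2)$-dimensional associahedron.
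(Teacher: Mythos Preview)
Your argument is correct and follows essentially the same route as the paper's proof: both set up the bijection sending a partial tree $\{(i_1,j_1),\ldots,(i_a,j_a)\}$ to the tangent cone cut out by the corresponding $a$ facet inequalities of the associahedron, and both verify order-reversal by observing that deleting an edge from the partial tree corresponds to deleting one facet inequality from the tangent-cone description. The only cosmetic difference is that you work in the Loday coordinates $\sum_{k\in[i,j]}x_k\ge\binom{j-i+1}{2}$, whereas the paper uses the equivalent kinematic-space presentation $s_{i\cdots j+1}\ge 0$ with all $c_{ij}=1$, an identification already made in Section~\ref{sec: kinematic associahedron}.
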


\begin{proof}
	Let us fix $s_{ij} = -c_{ij}=-1$ for all nonadjacent indices $i<j-1$, for $1\le i,j\le m$.  Collect these values of the $c_{ij}$ in the matrix $\mathcal{D}$, as usual.
	
	To the face given by the conical hull $\langle e_{i_1}-e_{j_1},\ldots, e_{i_a}-e_{j_a}\rangle_+$ of the tree triangulation of $\pi_0$ encoded by the partial tree $\{(i_1,j_1),\ldots, (i_a,j_a)\}\in\text{Tree}_{\le}^m$, where $1\le a\le m-1,$ we assign the tangent cone to the face of the associahedron:
	$$\langle e_{i_1}-e_{j_1},\ldots, e_{i_a}-e_{j_a}\rangle_+\mapsto \{s\in\mathcal{K}^m(\mathcal{D}): s_{i_1\cdots j_1+1}\ge 0,\ldots, s_{i_a\cdots j_a+1}\ge 0 \}.$$
	For the converse, note that the elements $\{(i_1,j_1),\ldots, (i_{m-1}-j_{m-1})\}\in \text{Tree}^m$ label the (top-dimensional) simplicial cones in the triangulation, and that these are in duality with exactly the tangent cones to the vertices of the associahedron, according to the correspondence above.  It is easy to see from the construction that this correspondence reverses inclusion of sets.  Indeed, any face of the triangulation is obtained by removing 1 or more generators $\{e_{i_\alpha}-e_{j_\alpha}:\alpha\in A\}$ from some simplicial cone, and conversely the tangent cone to any face of the associahedron is obtained by removing 1 or more inequalities 
	$$\{s_{i_\alpha\cdots s_{j_\alpha}}\ge 0:\alpha\in A\}.$$
	 This completes the proof.
	
\end{proof}

Recall the following formula for the Laplace transform: if $\pi = \langle e_{i_1}-e_{j_1},\ldots, e_{i_{m-1}}-e_{j_{m}}\rangle_+$ is a (simplicial) cone, then the Laplace transform of $\pi$ is given by
\begin{eqnarray}\label{eq: integral Laplace transform}
	\int_{u\in \pi} e^{-u\cdot \mathbf{y}}du=\int_{t_i\ge 0}e^{-t_1(y_{i_1}-y_{j_1})-\cdots-t_{m-1}(y_{i_{m-1}}-y_{j_{m-1}})} =\frac{1}{(y_{i_1}-y_{j_1})\cdots (y_{i_{m-1}}-y_{j_{m-1}})}.
\end{eqnarray}
Such Laplace transformations have appeared recently in context of positive geometry, see \cite[Equation 7.186]{Positivegeometry}.

Returning to Example \ref{example: triangulation n=3}, we remark that, by the Laplace transform in Equation \eqref{eq: integral Laplace transform}, the triangulation of cones
$$\langle e_1-e_2,e_2-e_3\rangle_+ = \langle e_1-e_3,e_2-e_3\rangle_+\cup \langle e_1-e_2,e_1-e_3\rangle_+.$$
provides a nice interpretation of the fundamental rational function identity
$$\frac{1}{(y_1-y_2)(y_2-y_3)} = \frac{1}{(y_1-y_3)(y_2-y_3)} + \frac{1}{(y_1-y_2)(y_1-y_3)},$$
where the common boundary line has been ignored.  Identities of this form can be viewed as identities between positive geometries \cite{Positivegeometry}.  
\begin{figure}[h!]
	\centering
	\includegraphics[width=1\linewidth]{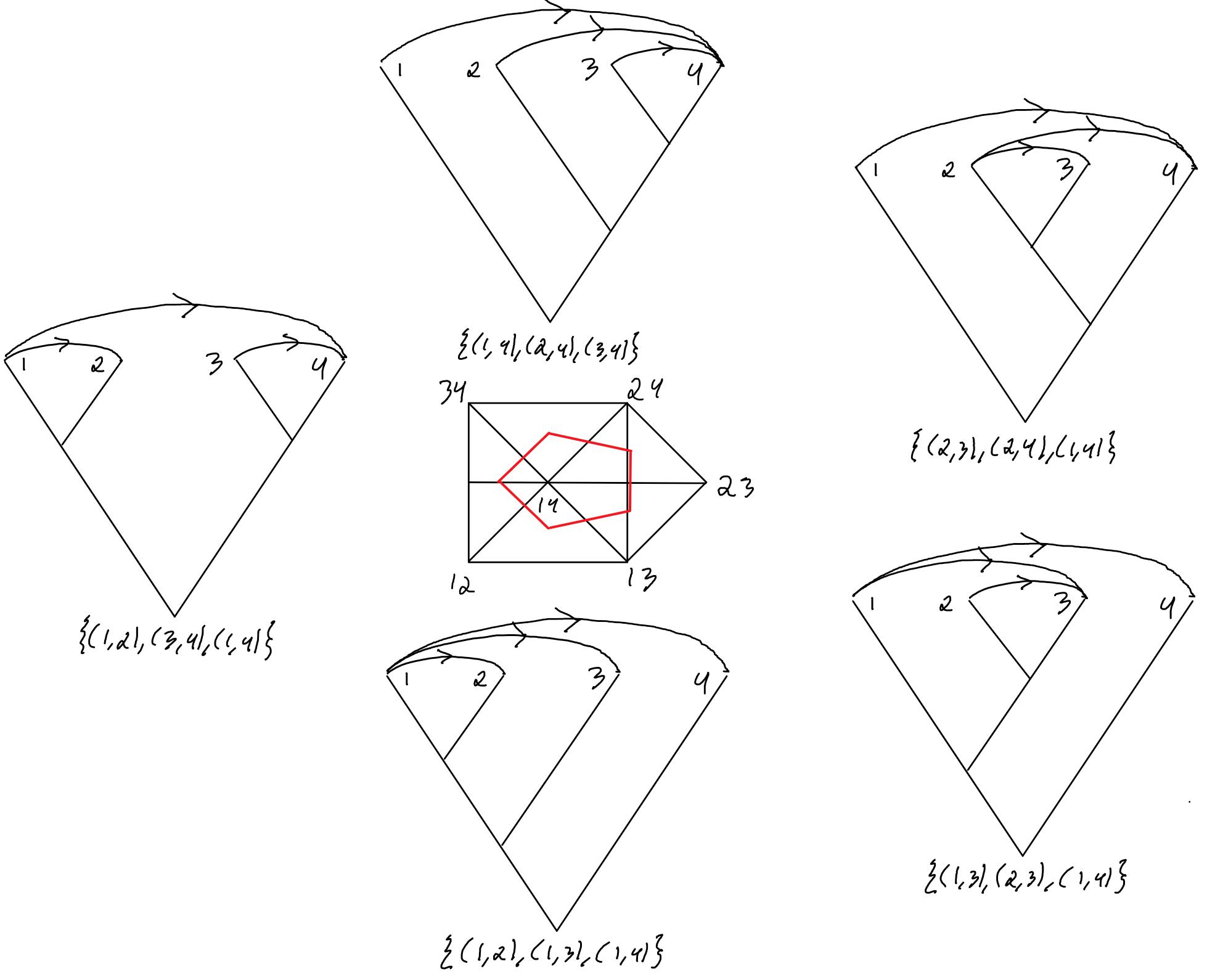}
	\caption{Top-dimensional simplicial cones in the triangulation of the standard plate $\lbrack 1,2,3,4\rbrack$ are dual to tangent cones to vertices of the associahedron (outlined schematically in red).  See Example \ref{Example:TriangulationStandardPlate4Coorsdinates}.}
	\label{fig:triangulation-standard-plate-dual-associahedron-2}
\end{figure}

\begin{figure}[h!]
	\centering
	\includegraphics[width=0.35\linewidth]{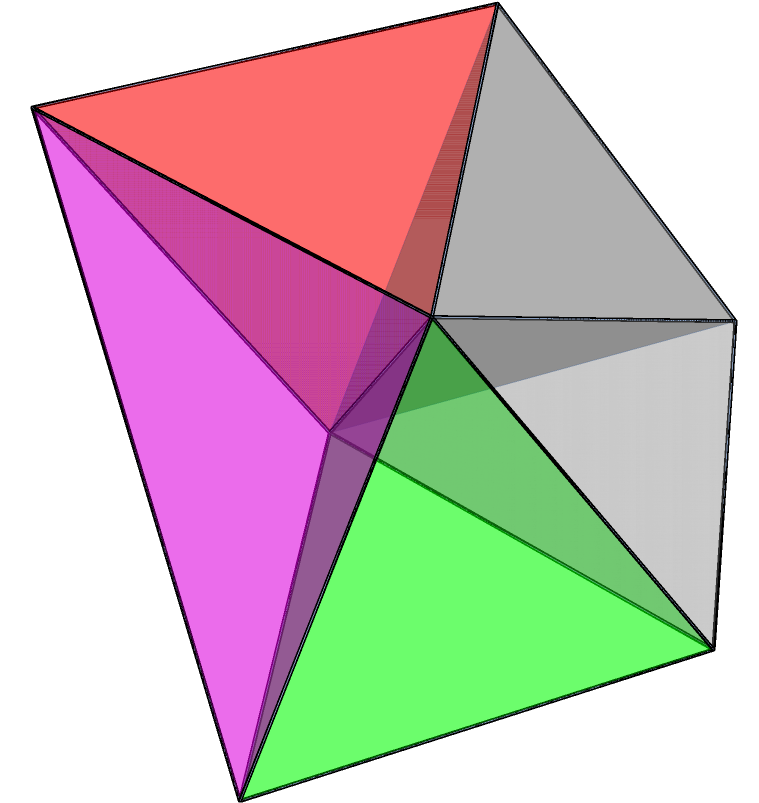}
	\caption{See Example \ref{Example:TriangulationStandardPlate4Coorsdinates}; triangulation of the root cone $\langle e_1-e_2,e_2-e_3,e_3-e_4\rangle_+.$}
	\label{fig:triangulationstandardplate}
\end{figure}

\begin{example}\label{Example:TriangulationStandardPlate4Coorsdinates}
	Set $h_{ij} = e_i-e_j$.  We consider the case $m=5$.
	
	The triangulation of $\lbrack 1,2,3,4\rbrack = \langle h_{12},h_{23},h_{34}\rangle_+$ is represented by the sum of the Laplace transforms of its five simplicial cones,
	$$\langle h_{12},h_{13},h_{14}\rangle_+,\langle h_{13},h_{23},h_{14}\rangle_+, \langle h_{12},h_{34},h_{14}\rangle_+,\langle h_{14},h_{23},h_{24}\rangle_+,\langle h_{14},h_{24},h_{34}\rangle_+ ,$$
	as
	\begin{eqnarray*}
		\frac{1}{\left(y_1-y_2\right) \left(y_2-y_3\right) \left(y_3-y_4\right)} & = & \frac{1}{\left(y_1-y_2\right) \left(y_1-y_3\right) \left(y_1-y_4\right)}+\frac{1}{\left(y_1-y_3\right) \left(y_2-y_3\right) \left(y_1-y_4\right)}\\
		& + & \frac{1}{\left(y_1-y_2\right) \left(y_1-y_4\right) \left(y_3-y_4\right)} + \frac{1}{\left(y_2-y_3\right) \left(y_1-y_4\right) \left(y_2-y_4\right)}\\
		& + & \frac{1}{\left(y_1-y_4\right) \left(y_2-y_4\right) \left(y_3-y_4\right)},
	\end{eqnarray*}
	see Figures \ref{fig:triangulation-standard-plate-dual-associahedron-2} and \ref{fig:triangulationstandardplate}.
	Note that here we have neglected characteristic functions of common faces.  Indeed, we have the identity of characteristic functions, modulo characteristic functions of faces,
	\begin{eqnarray*}
		& & \lbrack\lbrack 1,2\rbrack\rbrack \bullet \lbrack\lbrack 2,3\rbrack\rbrack \bullet \lbrack\lbrack 3,4\rbrack\rbrack\\
		& = & \lbrack\lbrack 1,3\rbrack\rbrack\bullet \lbrack\lbrack 2,3\rbrack\rbrack \bullet  \lbrack\lbrack 1,4\rbrack\rbrack + \lbrack\lbrack 2,3\rbrack\rbrack \bullet \lbrack\lbrack 1,4\rbrack\rbrack \bullet \lbrack\lbrack 2,4\rbrack\rbrack \\
		&+& \lbrack\lbrack 1,2\rbrack\rbrack \bullet \lbrack\lbrack 1,4\rbrack\rbrack \bullet \lbrack\lbrack 3,4\rbrack\rbrack+\lbrack\lbrack 1,4\rbrack\rbrack \bullet \lbrack\lbrack 2,4\rbrack\rbrack  \bullet \lbrack\lbrack 3,4\rbrack\rbrack+ \lbrack\lbrack 1,2\rbrack\rbrack \bullet \lbrack\lbrack 1,3\rbrack\rbrack \bullet \lbrack\lbrack 1,4\rbrack\rbrack,
	\end{eqnarray*}
	where $\lbrack\lbrack i,j\rbrack\rbrack \bullet \lbrack\lbrack k,\ell\rbrack\rbrack$ equals the characteristic function of the conical hull $\langle e_i-e_j,e_k-e_\ell\rangle_+$, in the notation from \cite{EarlyCanonicalBasis}.
	
	Recall that the integral Laplace transform has in its kernel the span of all characteristic functions of higher codimension faces, as well as the characteristic function of any non-pointed cone (i.e. a cone which contains a double infinite line, such as the whole space).  For example, this gives a polytopal explanation for why the cyclic sum vanishes:
	$$\frac{1}{(y_1-y_2)\cdots (y_{m-1}-y_{m})}+\frac{1}{(y_2-y_3)\cdots (y_{m}-y_1)}+\cdots +\frac{1}{(y_{m}-y_1)\cdots (y_{m-1}-y_{m})}=0,$$
	that is, because it can be shown that the $n$ cyclically rotated simplicial cones $\lbrack 1,2,\ldots, n\rbrack$ have union the whole space and have non-intersecting interiors.  
	
	On the other hand, while characteristic functions of non-pointed cones are in the kernel of the functional representation $\lbrack\lbrack i,j\rbrack\rbrack\mapsto \sum_{a=0}^\infty e^{-(y_i-y_j)}=\frac{1}{1-x_i/x_j}$, where $x_i=e^{-y_i}$, the higher codimension faces are \textit{not}.  See for example \cite{BarvinokPammersheim} for details about functional representations of polyhedral cones.  In the case at hand, corresponding to the present triangulation we have the functional identity
	\begin{eqnarray*}
		&&	\frac{1}{\left(1-\frac{x_1}{x_3}\right) \left(1-\frac{x_2}{x_3}\right) \left(1-\frac{x_1}{x_4}\right)}+\frac{1}{\left(1-\frac{x_2}{x_3}\right) \left(1-\frac{x_1}{x_4}\right) \left(1-\frac{x_2}{x_4}\right)}+\frac{1}{\left(1-\frac{x_1}{x_2}\right) \left(1-\frac{x_1}{x_3}\right) \left(1-\frac{x_1}{x_4}\right)}\\
		&+&\frac{1}{\left(1-\frac{x_1}{x_2}\right) \left(1-\frac{x_1}{x_4}\right) \left(1-\frac{x_3}{x_4}\right)}+\frac{1}{\left(1-\frac{x_1}{x_4}\right) \left(1-\frac{x_2}{x_4}\right) \left(1-\frac{x_3}{x_4}\right)}\\
		&-&\left(\frac{1}{\left(1-\frac{x_1}{x_3}\right) \left(1-\frac{x_1}{x_4}\right)}+\frac{1}{\left(1-\frac{x_2}{x_3}\right) \left(1-\frac{x_1}{x_4}\right)}+\frac{1}{\left(1-\frac{x_1}{x_4}\right) \left(1-\frac{x_2}{x_4}\right)}+\frac{1}{\left(1-\frac{x_1}{x_4}\right) \left(1-\frac{x_3}{x_4}\right)}\right.\\
		&+& \left.\frac{1}{\left(1-\frac{x_1}{x_2}\right) \left(1-\frac{x_1}{x_4}\right)}\right)+\frac{1}{1-x_1/x_4}\\
		& =&\frac{1}{\left(1-\frac{x_1}{x_2}\right) \left(1-\frac{x_2}{x_3}\right) \left(1-\frac{x_3}{x_4}\right)},
	\end{eqnarray*}
	which is the Laplace transform of the standard plate $\lbrack 1,2,3,4\rbrack$.  Here the subtracted fractions are discrete Laplace transforms of the 2-dimensional simplicial cones respectively
	$$\langle h_{13},h_{14}\rangle_+, \langle h_{23},h_{14}\rangle_+, \langle h_{14},h_{24}\rangle_+, \langle h_{14},h_{34}\rangle_+,\langle h_{12},h_{14}\rangle_+,$$
	and the fraction in the last line is identified with the central ray $\langle h_{14}\rangle_+$, seen in Figures \ref{fig:triangulation-standard-plate-dual-associahedron-2} and \ref{fig:triangulationstandardplate} as the central vertex labeled $14$, which is shorthand for $\langle e_1-e_4\rangle_+$.
	
	Let us make a comparison between the expression above and the formula in Section 4.3 of \cite{MizeraTwistedCycles} for the generic diagonal element of the KLT matrix:
	\begin{eqnarray*}
		\langle C(12345),C(12345)\rangle & = & 1-\left(\frac{1}{1-e^{2\pi is_{12}}} + \frac{1}{1-e^{2\pi is_{23}}} + \frac{1}{1-e^{2\pi is_{34}}} + \frac{1}{1-e^{2\pi is_{45}}} + \frac{1}{1-e^{2\pi is_{51}}}\right)\\
		& + & \frac{1}{(1-e^{2\pi is_{12}})(1-e^{2\pi is_{34}})} + \frac{1}{(1-e^{2\pi is_{23}})(1-e^{2\pi is_{45}})}\\
		& + & \frac{1}{(1-e^{2\pi is_{34}})(1-e^{2\pi is_{51}})} + \frac{1}{(1-e^{2\pi is_{45}})(1-e^{2\pi is_{12}})}\\
		& + & \frac{1}{(1-e^{2\pi is_{51}})(1-e^{2\pi is_{23}})}.
	\end{eqnarray*}
	See also \cite{Frost} for an inclusion/exclusion derivation of terms in the diagonal of the inverse KLT matrix.
\end{example}
We construct an explicit identification, using the identity $s_{45} = s_{123}$ and its permutations to eliminate the label $5$.  This gives 
\begin{eqnarray*}
	\langle C(12345),C(12345)\rangle & = & 1-\left(\frac{1}{1-e^{2\pi is_{12}}} + \frac{1}{1-e^{2\pi is_{23}}} + \frac{1}{1-e^{2\pi is_{34}}} + \frac{1}{1-e^{2\pi is_{123}}} + \frac{1}{1-e^{2\pi is_{234}}}\right)\\
	& + & \frac{1}{(1-e^{2\pi is_{12}})(1-e^{2\pi is_{34}})} + \frac{1}{(1-e^{2\pi is_{23}})(1-e^{2\pi is_{123}})}\\
	& + & \frac{1}{(1-e^{2\pi is_{34}})(1-e^{2\pi is_{234}})} + \frac{1}{(1-e^{2\pi is_{123}})(1-e^{2\pi is_{12}})}\\
	& + & \frac{1}{(1-e^{2\pi is_{234}})(1-e^{2\pi is_{23}})}.
\end{eqnarray*}
The termwise identification here is as follows:
\begin{eqnarray*}
	1  \Leftrightarrow  \frac{1}{1-x_1/x_4}\ \ \ \ \ \ \ \ \ \ \ \ \  & & \\
	\frac{1}{1-e^{2\pi is_{12}}}  \Leftrightarrow  \frac{1}{1-x_1/x_2}\frac{1}{1-x_1/x_4}&& \frac{1}{1-e^{2\pi is_{12}}}\frac{1}{1-e^{2\pi is_{34}}} \Leftrightarrow \frac{1}{(1-x_1/x_2)(1-x_3/x_4)(1-x_1/x_4)}\\
	\frac{1}{1-e^{2\pi is_{23}}}  \Leftrightarrow  \frac{1}{1-x_2/x_3}\frac{1}{1-x_1/x_4}&&\frac{1}{1-e^{2\pi is_{23}}}\frac{1}{1-e^{2\pi is_{123}}} \Leftrightarrow \frac{1}{(1-x_2/x_3)(1-x_1/x_3)(1-x_1/x_4)}\\
	\frac{1}{1-e^{2\pi is_{34}}}  \Leftrightarrow  \frac{1}{1-x_3/x_4}\frac{1}{1-x_1/x_4}&&\frac{1}{1-e^{2\pi is_{34}}}\frac{1}{1-e^{2\pi is_{234}}} \Leftrightarrow \frac{1}{(1-x_3/x_4)(1-x_2/x_4)(1-x_1/x_4)}\\
	\frac{1}{1-e^{2\pi is_{123}}}  \Leftrightarrow  \frac{1}{1-x_1/x_3}\frac{1}{1-x_1/x_4}&&\frac{1}{1-e^{2\pi is_{123}}}\frac{1}{1-e^{2\pi is_{12}}} \Leftrightarrow \frac{1}{(1-x_1/x_3)(1-x_1/x_2)(1-x_1/x_4)}\\
	\frac{1}{1-e^{2\pi is_{234}}}  \Leftrightarrow  \frac{1}{1-x_2/x_4}\frac{1}{1-x_1/x_4}&&\frac{1}{1-e^{2\pi is_{234}}}\frac{1}{1-e^{2\pi is_{23}}} \Leftrightarrow \frac{1}{(1-x_2/x_4)(1-x_2/x_3)(1-x_1/x_4)}\\
\end{eqnarray*}

\begin{rem}
Dualizing term-by-term the formula given in Lemma 4.1 in \cite{MizeraTwistedCycles} for the self-intersection number, 
$$\langle C(1,2,\ldots, m),C(1,2,\ldots, m)\rangle = \sum_{k=0}^{m-2}\sum_{F=H_1\cap\cdots\cap H_k}(-1)^{m-2-k}\frac{1}{(1-e^{2\pi i{H_1}})\cdots (1-e^{2\pi i {H_k}})},$$
then we obtain exactly the general formula for the expansion as a signed sum of fractions.  Indeed, we have the alternating sum over the faces of the triangulation of 
$$\langle e_1-e_2,\ldots, e_{m-1}-e_{m}\rangle_+,$$ 

	$$\frac{1}{(1-x_1/x_2)\cdots (1-x_{m-1}/x_{m})}=\sum_{k=1}^{m-1}(-1)^{m-1-k}\sum_{\{(i_1,j_1),\ldots, (i_k,j_k),(1,m)\}} \frac{1}{(1-x_{i_1}/x_{j_1})\cdots (1-x_{i_k}/x_{j_k})},$$
labeled by the set of partial trees $\text{Tree}_{\le}^M$.
\end{rem}

In Section \ref{sec:mAlpha} we formalize the above discussion.

\section{Restricting biadjoint scalar amplitudes to subspaces of the kinematic space}\label{sec:mAlpha}
In this section, we compute the restriction of the biadjoint scalar amplitude \cite{CHY2014}, see also \cite{worldsheet}, to an interesting subspace $X^n$ of the kinematic space.  On this subspace, almost all of the poles become spurious and the amplitude beautifully collapses to a simple fraction.

Such restrictions have been considered more recently, since this paper was posted to the arXiv, for example in \cite{CE2021} in the context of generalized biadjoint scalar amplitudes \cite{CEGM2}, and in the context of $\text{Tr}(\phi^3)$ amplitudes in \cite{HiddenZeros}.

Denote by $X^n$ the subspace\footnote{In the notation of the rest of the paper, $X^n$ is a subspace of $\mathcal{K}^{n-2}$} of all symmetric (real) $n\times n$ matrices with 
\begin{enumerate}
	\item $s_{ii} = 0$ for $i=1,\ldots, n-1$.
	\item $s_{ij} = 0$ whenever $ \vert i-j\vert >1$, for $1\le i,j\le n-1$.
	\item $s_{in} = -\sum_{j=1}^{n-1} s_{ij}$ for each $i=1,\ldots, n-1$.
	\item $s_{nn} = \sum_{1\le i<j\le n-1}s_{ij}$ $(=s_{12\cdots n-1})$ (``massive $n^\text{th}$ particle'').
\end{enumerate}
While last condition (4) of the off-shell $n^\text{th}$ particle emerged a posteriori, it is tempting to study it in the context of the massive scattering equations \cite{Naculich}.  Alternatively, in more recent work \cite{CE2021} a variant on our kinematic subspace, called Minimal Kinematics, was introduced in which all particles remain massless.  See also \cite{EPS2024}.

For example, a generic element of $X^6$ looks like
$$\begin{bmatrix}
	0 & s_{12} & 0 & 0 & 0 & -s_{12} \\ 
	s_{12} & 0 & s_{23} & 0 & 0 & -(s_{12}+s_{23}) \\ 
0 & s_{23} & 0 & s_{34} & 0 & -(s_{23}+s_{34}) \\ 
0 & 0 & s_{34} & 0 & s_{45} & -(s_{34}+s_{45}) \\ 
0 & 0 & 0 & s_{45} & 0 & -s_{45} \\ 
-s_{12} & -(s_{12}+s_{23}) & -(s_{23}+s_{34}) & -(s_{34}+s_{45}) & -s_{45} & \sum_{i=1}^4 s_{i,i+1}
\end{bmatrix} $$

Our main result is to prove the following identities for the biadjoint amplitude and its $\alpha'$ deformation; notation will be explained subsequently.

\begin{thm}\label{thm:biadjoint identities}
	On the support of $X^n$, the amplitudes $m$ and $m_{\alpha'}$ simplify to respectively 
	\begin{eqnarray*}
		m((12\cdots n)\vert (12\cdots n))\big\vert_{X^n} & = & \frac{s_{12\cdots n-1}}{s_{12}s_{23}\cdots s_{n-2,n-1}}\\
		m_{\alpha'}(PT(1,\ldots, n),PT(1,\ldots, n))\big\vert_{X^n} & = & \frac{1-\exp\left(-2\pi i\alpha' \sum_{i=1}^{n-2}s_{i,i+1}\right)}{\prod_{i=1}^{n-2}\left(1-e^{-2\pi i\alpha' s_{i,i+1}}\right)}.
	\end{eqnarray*}
\end{thm}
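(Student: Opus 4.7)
The plan is to deduce both identities from the triangulation of the root cone $\lbrack 1,2,\ldots,n-1\rbrack = \langle e_1-e_2,\ldots,e_{n-2}-e_{n-1}\rangle_+$ into $C_{n-2}$ simplicial cones indexed by trees in $\text{Trees}^{n-1}$, as established in Section \ref{sec: Appendix triangulations dual associahedra}. The combinatorial bridge is that this collection of simplicial cones is in natural bijection with the set of planar binary trees on $n$ cyclically ordered leaves (the vertices of the $(n-3)$-dimensional associahedron appearing in Mizera's formula), both collections being enumerated by $C_{n-2}$.

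For the first (undeformed) identity I would start from Mizera's formula
\[ m(PT(1,\ldots,n),PT(1,\ldots,n)) = \sum_T\prod_{e \in T}\frac{1}{s_{I_e}}, \]
and use the momentum-conservation identity $s_{I}=s_{I^c}$ (Proposition \ref{prop:linear relations}) to replace each propagator subset $I_e \subseteq \{1,\ldots,n\}$ by its unique representative $I'_e \subseteq \{1,\ldots,n-1\}$ not containing the index $n$. Since $I_e$ is a cyclic interval of $\{1,\ldots,n\}$, the representative $I'_e$ is a (linearly) contiguous subinterval $\{i_e,i_e+1,\ldots,j_e\}$ of size at least two, and on $X^n$ one has $s_{I'_e} = s_{i_e,i_e+1}+s_{i_e+1,i_e+2}+\cdots+s_{j_e-1,j_e}$. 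The $n-3$ subintervals coming from the propagators of $T$ are nested or disjoint and, together with the full interval $\{1,\ldots,n-1\}$, form the edge set of a unique tree in $\text{Trees}^{n-1}$ under the correspondence $\{i,\ldots,j\}\leftrightarrow e_i-e_j$. Under the Laplace identification $y_i-y_j \leftrightarrow s_{i\cdots j}$, the Laplace transform of the associated simplicial cone is $\frac{1}{s_{12\cdots n-1}}\prod_{e \in T}\frac{1}{s_{I'_e}}$—the extra factor arising from the generator $e_1-e_{n-1}$ common to every cone of the triangulation—so each Mizera summand equals $s_{12\cdots n-1}$ times the Laplace transform of its corresponding simplicial cone. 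Summing over $T$ and applying the triangulation identity $\sum_{\text{simplicial cones}}(\text{Laplace}) = \frac{1}{s_{12}s_{23}\cdots s_{n-2,n-1}}$ yields the first identity.

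The $\alpha'$-deformed identity proceeds by the identical combinatorial skeleton but with the discrete (exponential) Laplace transform $\frac{1}{1-e^{-2\pi i\alpha' y}}$ in place of $\frac{1}{y}$; the relevant expansion as an alternating sum over faces of the associahedron is Lemma 4.1 of \cite{MizeraTwistedCycles}, whose dualization to an alternating sum over faces of the root-cone triangulation is indicated in the remark concluding Section \ref{sec: Appendix triangulations dual associahedra}. Collecting contributions of all faces containing the always-present generator $e_1-e_{n-1}$ produces the numerator $1-e^{-2\pi i\alpha' s_{12\cdots n-1}}$, while the denominator $\prod_{i=1}^{n-2}(1-e^{-2\pi i\alpha' s_{i,i+1}})$ is the discrete Laplace transform of the whole root cone.

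The principal obstacle will be verifying this bijection carefully, reconciling the cyclic-interval propagator-subset conventions of Mizera's formula with the linear-contiguous-subset conventions of the root-cone triangulation through $s_I = s_{I^c}$, and checking that in the $\alpha'$-deformed case the alternating-sum bookkeeping over faces of all codimensions collapses as claimed. Once these verifications are in place, both identities reduce formally to the triangulation identity established in Section \ref{sec: Appendix triangulations dual associahedra}.
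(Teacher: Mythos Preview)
Your proposal is correct and follows essentially the same route as the paper: both arguments pass from the Mandelstam variables to the $y$-coordinates via the telescoping $s_{i,i+1,\ldots,j}=y_i-y_j$ on $X^n$, identify each term of Mizera's (deformed) formula with the (discrete) Laplace transform of a simplicial cone in the tree triangulation of $\langle e_1-e_2,\ldots,e_{n-2}-e_{n-1}\rangle_+$, and then collapse the sum using the triangulation identity together with the common factor coming from $e_1-e_{n-1}$. The only cosmetic differences are that the paper treats the $\alpha'$-deformed case first and in more detail, and that the identity $s_I=s_{I^c}$ you invoke is not literally Proposition~\ref{prop:linear relations} but rather the general momentum-conservation fact used elsewhere in the paper.
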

The proof will involve the following change of variable, to transform the computation into an application of inclusion exclusion for simplicial cones: gathering together the independent coordinates of any $(s)\in X^n$ in an $(n-2)$-tuple $(s_{12},s_{23},\ldots, s_{n-2,n-1})$, let $(y_1,\ldots, y_n)\in\mathbb{R}^n$ such that $s_{i,i+1}=y_i-y_{i+1}$ for each $i=1,\ldots, n-2$.  Note that $(y_1,\ldots, y_n)$ is defined up to translation by a multiple of $(1,1,\ldots, 1)$.
	
\begin{rem}
	It is interesting to note that the biadjoint amplitude is proportional the mass of the $n^\text{th}$ particle, since by momentum conservation we have $s_{12\cdots n-1} = \left(\sum_{i=1}^{n-1}p_i\right)^2=p_n^2$.  Moreover, for the regular biadjoint amplitude, since 
	$$s_{12\cdots n-1} = s_{12}+s_{23}+\cdots+s_{n-2,n-1}$$
	we have the intriguing simplification
$$m((12\cdots n)\vert (12\cdots n))\big\vert_{X^n} = \sum_{i=1}^{n-2}\frac{1}{s_{12}s_{23}\cdots \widehat{s_{i,i+1}}\cdots s_{n-2,n-1}},$$
	where the terms in the summation are in bijection with the $(n-2)$ boundary \textit{facets} of the polyhedral cone
	$$\langle e_1-e_2,e_2-e_3,\ldots, e_{n-2}-e_{n-1}\rangle_+ = \left\{\sum_{i=1}^{n-2}t_i(e_i-e_{i+1}):t_i\ge 0\right\}$$
	from above, see Figures \ref{fig:triangulation-standard-plate-dual-associahedron-2} and \ref{fig:triangulationstandardplate}.  Here the notation $\widehat{s_{i,i+1}}$ indicates that the term $s_{i,i+1}$ has been omitted from the product.
\end{rem}

Taking into account the change of variable, the first identity in Theorem \ref{thm:biadjoint identities} becomes
\begin{eqnarray*}
	m((12\cdots n)\vert (12\cdots n))\big\vert_{X^n} & = &  \mathcal{L}_{\int}(\langle e_1-e_2,\ldots, e_{n-2}-e_{n-1}\rangle_+)\cdot (y_{1}-y_{n-1})\\
	& = & \sum_{i=1}^{n-2}\mathcal{L}_{\int}(\langle e_1-e_2,\ldots,\widehat{e_{i}-e_{i+1}}, \ldots, e_{n-2}-e_{n-1}\rangle_+),
\end{eqnarray*}
the sum of the integral Laplace transforms over the boundary components of the cone 
$$\langle e_1-e_2,\ldots, e_{n-2}-e_{n-1}\rangle_+=\{t_1(e_1-e_2)+\cdots +t_{n-1}(e_{n-2}-e_{n-1}):t_i\ge 0\},$$
where for the integrals the measure is taken on the respective ambient subspace.

Now let $\partial^k(\pi_\sigma)$ denote the set of faces of dimension $k$.

Then we have
\begin{eqnarray*}
	& & m_{\alpha'}((12\cdots n),(12\cdots n)))\big\vert_{X^n} =  \mathcal{L}_{\sum}(\langle e_1-e_2,\ldots, e_{n-2}-e_{n-1}\rangle_+)\cdot (1-e^{-(y_{1}-y_{n-1})})\\
	& = & \sum_{J\subsetneq \{1,2,\ldots, n-1\}} (-1)^{(n-2)-\vert J\vert}\mathcal{L}_{\sum}\left(\left \langle \{e_{j}-e_{j+1}:j\in J\} \right\rangle\right),
\end{eqnarray*}
which is an application of inclusion/exclusion to the set of all faces of codimension at least $1$ the cone
$$\langle e_1-e_2,\ldots, e_{n-2}-e_{n-1}\rangle_+.$$

Let $\mathcal{T}_n$ be the set of all (partial) triangulations of a polygon $\text{poly}_n$ oriented counterclockwise with vertices $1,2,\ldots, n$; these partial triangulations are in correspondence with faces of the associahedron, or dually with the faces of the simplicial complex formed from the standard triangulation of the simplicial cone
$$\langle e_1-e_2,\ldots, e_{n-2}-e_{n-1}\rangle_+.$$
This can be seen by taking the cone over the triangulation of the so-called root polytope from \cite{GelfandGraevPostnikov}, see also \cite{Cho} for a proof of the triangulation by adding relative volumes of simplices.

For example, the set of forests each having a single tree consists of the $C_n$ binary trees, where $C_n = \frac{1}{n-1}\binom{2(n-2)}{2}$ is the Catalan number; these label the maximal dimension simplicial cones in the triangulation.

Let us express the formula for $\langle C(1,2,\ldots, n),C(1,2,\ldots, n)\rangle$ from \cite{MizeraTwistedCycles} in terms of partial triangulations.  We recall that
\begin{eqnarray}\label{eqn:Mizera m alpha}
\langle C(1,2,\ldots, n),C(1,2,\ldots, n)\rangle = \sum_{T\in\mathcal{T}_n}(-1)^{(n-3)-\vert T\vert}\prod_{E\in T}\frac{1}{1-e^{-2\pi i\alpha's_{\overline{E}}}}
\end{eqnarray}
as $T$ ranges over all partial triangulations of $\text{poly}_n$ and $E\in T$ ranges over the edges of $T$, and where $\vert T \vert$ is the number of edges $E$ of $T$.  Here $s_{\overline{E}} = s_{i i+1\cdots j}$ if $E=(i,j)$.

We observe that the restriction of the right-hand side of Equation \eqref{eqn:Mizera m alpha} to $X^n$ gives rise to a termwise identification with the sums of the Laplace transforms of the boundary components of the cone 
$$\langle e_1-e_2,\ldots, e_{n-2}-e_{n-1}\rangle_+=\{t_1(e_1-e_2)+\cdots +t_{n-1}(e_{n-2}-e_{n-1}):t_i\ge 0\}.$$

Given an n-cycle $\sigma=(\sigma_1,\sigma_2,\ldots, \sigma_n)$ with minimal element $\sigma_1$, let use define $\pi_\sigma = \langle e_{\sigma_1}-e_{\sigma_2},\ldots, e_{\sigma_{n-1}}-e_{\sigma_n}\rangle_+$.  Denote by 
$$\partial(\pi_\sigma) =\left\{ \langle e_{\sigma_1}-e_{\sigma_2},\ldots,\widehat{e_{\sigma_i}-e_{\sigma_{i+1}}},\ldots, e_{\sigma_{n-1}}-e_{\sigma_n}\rangle : i=1,\ldots, n-1\right\}$$
the boundary of $\pi_\sigma$.
\begin{prop}\label{prop:triangulation Nearest Neighbor}
	With $\sigma=(1,2,\ldots, n)$, we have the simplification
	$$\langle C(1,2,\ldots, n),C(1,2,\ldots, n)\rangle\big\vert_{X^n} = \sum_{\pi\in \partial(\pi_{\sigma})} \mathcal{L}_{\sum}(\pi).$$
\end{prop}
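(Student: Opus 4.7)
The plan is to re-sum Mizera's formula (Equation \ref{eqn:Mizera m alpha}) term by term via the duality of Section \ref{sec: Appendix triangulations dual associahedra} between partial triangulations of $\text{poly}_n$ and faces of the tree triangulation of the root cone $\pi_\sigma$. Under the change of variables $s_{i,i+1}=y_i-y_{i+1}$ for $i=1,\ldots,n-2$, I would first use conditions (1)--(3) defining $X^n$ to rewrite, for each diagonal $E=(i,j)$ of the $n$-gon, the partial Mandelstam invariant $s_{\overline E}=s_{i,i+1\cdots j}$ as a single root-direction difference $y_p-y_q$: when $j\le n-1$ this is an immediate telescoping from the nearest-neighbor vanishing $s_{k\ell}=0$ for $|k-\ell|\ge 2$, and when the diagonal involves the massive vertex $n$ one applies condition (3), $s_{in}=-\sum_{k=1}^{n-1}s_{ik}$, to recover the same form modulo a sign.

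With this substitution each product $\prod_{E\in T}(1-e^{-2\pi i\alpha' s_{\overline E}})^{-1}$ becomes the discrete Laplace transform $\mathcal{L}_{\sum}(\pi_T)$ of the simplicial cone $\pi_T$ generated by the roots $e_p-e_q$, one for each diagonal $E\in T$. By the bijection of Section \ref{sec: Appendix triangulations dual associahedra}, as $T$ varies over $\mathcal{T}_n$ the cones $\pi_T$ run over all faces of the tree triangulation of $\pi_\sigma$, and the signed coefficient $(-1)^{(n-3)-|T|}$ records their codimension.

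The proof then reduces to a combinatorial collapse on the face poset of the tree triangulation: the alternating sum of Laplace transforms over all faces simplifies, by a standard valuation-theoretic inclusion--exclusion (cf.\ Example \ref{Example:TriangulationStandardPlate4Coorsdinates}), to the unsigned sum $\sum_{\pi\in\partial(\pi_\sigma)}\mathcal{L}_{\sum}(\pi)$ over the codimension-one boundary components. The principal technical obstacle is the first step: for diagonals through the massive vertex $n$, the standard massless relation $s_I=s_{I^c}$ fails on $X^n$ because $s_{nn}\neq 0$, so both the sign and the orientation of the resulting root direction must be tracked carefully by hand from conditions (3)--(4); once this is done, the final identification is a bookkeeping exercise combining the kernel properties of $\mathcal{L}_{\sum}$ on non-pointed cones and the combinatorics of the tree triangulation.
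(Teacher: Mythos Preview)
Your outline is on the same track as the paper: change variables $2\pi i\alpha' s_{i,i+1}=y_i-y_{i+1}$, identify each summand in Mizera's formula with the discrete Laplace transform of a simplicial cone coming from the tree triangulation of the root cone, and collapse via inclusion--exclusion. Two technical points, however, are handled differently in the paper and deserve attention.

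First, the ``principal obstacle'' you isolate (diagonals through the vertex $n$) is not resolved in the paper by computing $s_{i\cdots n}\big\vert_{X^n}$ directly from conditions (3)--(4). If you do that, you obtain for instance $s_{i\cdots n}\big\vert_{X^n}=y_{n-1}-y_{i-1}$, which is a \emph{negative} root direction and does not match the root $e_p-e_q$ required by the duality of Section~\ref{sec: Appendix triangulations dual associahedra}. The paper instead (as in the worked $n=5$ case preceding the proposition) uses the massless identity $s_I=s_{I^c}$ on the kinematic space to rewrite every $s_{\overline E}$ in terms of labels $1,\dots,n-1$ \emph{before} restricting; after that the telescoping $s_{i\cdots j}=y_i-y_j$ holds uniformly and no special case is needed.

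Second, the inclusion--exclusion step is more specific than your sketch indicates. The cones $\pi_T$ you obtain from the partial triangulations $T$ are \emph{not} themselves the faces of the tree triangulation: they are missing the common ray $\langle e_1-e_{n-1}\rangle_+$. The paper's device is to multiply and divide each summand by $\bigl(1-e^{-(y_1-y_{n-1})}\bigr)^{-1}$, so that every term becomes $\mathcal L_{\sum}\bigl(\langle e_{i_1}-e_{j_1},\dots,e_{i_{|T|}}-e_{j_{|T|}},\,e_1-e_{n-1}\rangle_+\bigr)$; these \emph{are} precisely the faces of the tree triangulation, and the alternating sum then collapses to $\mathcal L_{\sum}\bigl(\langle e_1-e_2,\dots,e_{n-2}-e_{n-1}\rangle_+\bigr)$, yielding the closed form $\bigl(1-e^{-(y_1-y_{n-1})}\bigr)\prod_{i=1}^{n-2}\bigl(1-e^{-(y_i-y_{i+1})}\bigr)^{-1}$. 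Without inserting this common ray, the collapse you invoke is not the standard one for a triangulation.
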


\begin{proof}
	
	For $i=1,\ldots, n-2$, set $2\pi i\alpha's_{i,i+1} = y_i-y_{i+1}$, where the point $(y_1,\ldots, y_n)$ is defined only modulo translation by multiples of $(1,\ldots, 1)$.
	We show that 
	$$\langle C(1,2,\ldots, n),C(1,2,\ldots, n)\rangle\big\vert_{X^n} = \frac{1-e^{-2\pi i\alpha' s_{1,n-1}}}{\prod_{i=1}^{n-2}\left(1-e^{-2\pi i\alpha' s_{i,i+1}}\right)}.$$
	
	For any contiguous subset $\{i,i+1,\ldots, j\}$ we have telescopically $s_{i,i+1,\ldots, j} = y_i-y_j$, hence
	\begin{eqnarray*}
		& & \langle C(1,2,\ldots, n),C(1,2,\ldots, n)\rangle\big\vert_{X_n(\mathbf(0))}\\
		& = &  \sum_{T\in\mathcal{T}_n}(-1)^{(n-3)-\vert T\vert}\prod_{E\in T}\frac{1}{1-e^{-2\pi i\alpha' s_{\overline{E}}}}\\
		& = & \sum_{T\in\mathcal{T}_n}(-1)^{(n-3)-\vert T\vert}\prod_{(i,j)\in T}\frac{1}{1-e^{- y_{ij}}}\\
		& = & \sum_{T\in\mathcal{T}_n}(-1)^{(n-3)-\vert T\vert}\prod_{(i,j)\in T}\left(\sum_{m=0}^\infty e^{-m(y_{i}-y_{j})}\right)\\
		& = & \sum_{T\in\mathcal{T}_n}(-1)^{(n-3)-\vert T\vert}\left(\sum_{m_1,\ldots, m_{\vert T\vert}=0}^\infty e^{-\left(m_1(y_{i_1}-y_{j_1})+\cdots m_{\vert T\vert}(y_{i_{\vert T\vert}}-y_{j_{\vert T\vert}})\right)}\right)\\
		& = & \sum_{(\{i_a,j_a\}=T)\in\mathcal{T}_n}(-1)^{(n-1)-\vert T\vert}\mathcal{L}_{\sum}\left(\langle e_{i_1}-e_{j_1},\ldots, e_{i_{\vert T\vert}}-e_{j_{\vert T\vert}},e_1-e_{n-1}\rangle_+ \right)\left(\frac{1}{1-e^{-(y_1-y_{n-1})}}\right)^{-1}.
	\end{eqnarray*}
	Here 
	$$\mathcal{L}_{\sum}\left(\langle e_{i_1}-e_{j_1},\ldots, e_{i_{\vert T\vert}}-e_{j_{\vert T\vert}},e_1-e_{n-1}\rangle_+\right)=\sum_{m_0,m_1,\ldots, m_{\vert T\vert}=0}^\infty e^{-\left(m_0(y_{1}-y_{n-1})+m_1(y_{i_1}-y_{j_1})+\cdots m_{\vert T\vert}(y_{i_{\vert T\vert}}-y_{j_{\vert T\vert}})\right)}$$
	is the discrete Laplace transform of the polyhedral cone $\langle e_{i_1}-e_{j_1},\ldots, e_{i_{\vert T\vert}}-e_{j_{\vert T\vert}},e_1-e_{n-1}\rangle_+$, where the series converges on the union of Weyl chambers where 
	$y_{i_a}>y_{j_a}$ for all $a$, and $y_1>y_{n-1}$.  Now we recognize 
	$$ \sum_{(\{i_a,j_a\}=T)\in\mathcal{T}_n}(-1)^{(n-1)-\vert T\vert}\mathcal{L}_{\sum}\left(\langle e_{i_1}-e_{j_1},\ldots, e_{i_{\vert T\vert}}-e_{j_{\vert T\vert}},e_1-e_{n-1}\rangle_+ \right)$$
	as the image of the Laplace transform valuation of the alternating sum of characteristic functions over the set of faces of simplicial cones in the triangulation of $\langle e_1-e_2,\ldots, e_{n-2}-e_{n-1}\rangle_+$.

	We therefore obtain for the Laplace transform
	$$\mathcal{L}\left(\langle e_{1}-e_{2},\ldots, e_{n-2}-e_{n-1}\rangle_+\right)=\left(\prod_{i=1}^{n-2}\frac{1}{1-e^{-(y_i-y_{i+1})}}\right),$$
	hence
	\begin{eqnarray*}
		& & \langle C(1,2,\ldots, n),C(1,2,\ldots, n)\rangle\big\vert_{X^n}\\
		& = & \left(\prod_{i=1}^{n-2}\frac{1}{1-e^{-(y_i-y_{i+1})}}\right)\left(\frac{1}{1-e^{-(y_1-y_{n-1})}}\right)^{-1}=\frac{1-e^{-2\pi i\alpha' s_{1,n-1}}}{\prod_{i=1}^{n-2}\left(1-e^{-2\pi i\alpha' s_{i,i+1}}\right)}\\
	\end{eqnarray*}
	
\end{proof}

\begin{prop}
	Let real numbers $y_1>y_2>\cdots>y_{n-1}$ be given.  We assume that the first n-1 particles are massless, but that particle $n$ has a positive mass $p_n^2 = s_{12\cdots n-1}$, and that for all $1\le i<j\le n-1$	only the adjacent Mandelstam variables are nonzero: we define
	$$s_{i,i+1}=s_{i+1,i}=y_i-y_{i+1},$$ and put $s_{ij}=0$ whenever $\vert i-j\vert >1$.  For each $i=1,\ldots, n-1$, define 
	$$s_{in}=-\sum_{j=1}^{n-1} s_{ij}.$$
	
	Then
	$$m(PT(1,2,\ldots, n),PT(1,2,\ldots, n))\big\vert_{X^n} = \frac{s_{12\cdots n-1}}{s_{12}\cdots s_{n-2 n-1}} = \frac{p_n^2}{s_{12}\cdots s_{n-2 n-1}}.$$
\end{prop}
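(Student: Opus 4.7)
The plan is to parallel the argument of Proposition \ref{prop:triangulation Nearest Neighbor} but with the integral Laplace transform $\mathcal{L}_\int$ in place of the discrete one $\mathcal{L}_\sum$. First I would expand $m$ via its standard sum-over-Feynman-diagrams representation,
$$m(PT(1,\ldots,n),PT(1,\ldots,n)) \;=\; \sum_T \prod_{E\in T}\frac{1}{s_{\overline{E}}},$$
where $T$ ranges over the maximal triangulations in $\mathcal{T}_n$ (equivalently, cubic planar trees on the cyclically ordered leaves $1,\ldots,n$) and the product is over the $n-3$ diagonals of $T$. Under the change of variables $s_{i,i+1}=y_i-y_{i+1}$ for $i=1,\ldots,n-2$, every Mandelstam $s_{\overline{E}}$ collapses to an expression of the form $y_a-y_b$ with $a,b\in\{1,\ldots,n-1\}$: for a chord $E=(i,j)$ the contiguous Mandelstam telescopes directly when $j\le n-1$, and in the remaining case $j=n$ one uses momentum conservation together with $p_n^2=s_{1,\ldots,n-1}$ to re-express it in the same coordinates.

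Next I would invoke the duality established in the proof of Proposition \ref{prop:triangulation Nearest Neighbor}: each maximal triangulation $T$ corresponds to a top-dimensional simplicial cone
$$\sigma_T \;=\; \bigl\langle\{e_{a_E}-e_{b_E}:E\in T\},\;e_1-e_{n-1}\bigr\rangle_+$$
in the tree triangulation of the root cone $\langle e_1-e_2,\ldots,e_{n-2}-e_{n-1}\rangle_+$, where $e_{a_E}-e_{b_E}$ is the root obtained from $s_{\overline{E}}$ by the identification of the previous step. Multiplying and dividing each summand by $y_1-y_{n-1}=s_{1,\ldots,n-1}$ converts the product of $n-3$ factors $1/s_{\overline{E}}$ into $(y_1-y_{n-1})\cdot\mathcal{L}_\int(\sigma_T)$, since $\mathcal{L}_\int$ of a simplicial cone is the reciprocal product of the Laplace duals of its generators.

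Summing over triangulations and using that the cones $\sigma_T$ triangulate $\langle e_1-e_2,\ldots,e_{n-2}-e_{n-1}\rangle_+$ with pairwise measure-zero overlaps yields
$$m\bigl|_{X^n} \;=\; (y_1-y_{n-1})\sum_T \mathcal{L}_\int(\sigma_T) \;=\; (y_1-y_{n-1})\,\mathcal{L}_\int\bigl(\langle e_1-e_2,\ldots,e_{n-2}-e_{n-1}\rangle_+\bigr) \;=\; \frac{y_1-y_{n-1}}{\prod_{i=1}^{n-2}(y_i-y_{i+1})},$$
which in the original Mandelstam variables is $s_{1,\ldots,n-1}/(s_{12}\cdots s_{n-2,n-1})=p_n^2/(s_{12}\cdots s_{n-2,n-1})$, as claimed.

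The main obstacle will be the chord-to-root identification for diagonals touching the massive particle $n$: the Mandelstam $s_{i,\ldots,n}$ does not telescope directly, and must be reconciled with some $y_a-y_b$ via momentum conservation with $p_n^2=s_{1,\ldots,n-1}$, with signs and shifts tracked consistently so that the assembly into $\sigma_T$ in the tree triangulation of $\langle e_1-e_2,\ldots,e_{n-2}-e_{n-1}\rangle_+$ is correct. A shortcut that sidesteps this bookkeeping entirely is to take the field-theory limit $\alpha'\to 0$ in Proposition \ref{prop:triangulation Nearest Neighbor}: applying $1-e^{-x}\sim x$ to numerator and each factor of the denominator, the limit $\lim_{\alpha'\to 0}(2\pi i\alpha')^{n-3}\langle C(1,\ldots,n),C(1,\ldots,n)\rangle|_{X^n}$ produces exactly $s_{1,\ldots,n-1}/(s_{12}\cdots s_{n-2,n-1})$, which coincides with $m|_{X^n}$ on general grounds.
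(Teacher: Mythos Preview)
Your proposal is correct and follows essentially the same route as the paper: the paper's own proof simply says the argument is ``precisely analogous to the proof of Proposition~\ref{prop:triangulation Nearest Neighbor}, except that in the continuous Laplace transform higher codimension faces are mapped to zero,'' and then cites \cite{GelfandGraevPostnikov,Cho} for the resulting triangulation identity---exactly the mechanism you spell out (only the Catalan-many top cones survive, and their integral Laplace transforms sum to that of the full root cone). Your alternative shortcut via the $\alpha'\to 0$ limit of Proposition~\ref{prop:triangulation Nearest Neighbor} is a clean way to bypass the chord-to-root bookkeeping for diagonals touching vertex~$n$, which the paper likewise does not make explicit.
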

\begin{proof}
	This is precisely analogous to the proof of Proposition \ref{prop:triangulation Nearest Neighbor}, except that in the continuous Laplace transform higher codimension faces are mapped to zero, and we end up with only the Catalan-many fractions in Mandelstam variables, which by \cite{GelfandGraevPostnikov,Cho} simplify over a common denominator and then decompose, as 
	$$\frac{y_{1}-y_{n-1}}{y_{12}\cdots y_{n-2,n-1}} = \sum_{i=1}^{n-2} \frac{1}{y_{12}\cdots \widehat{y_{i,i+1}}\cdots y_{n-2,n-1}},$$
	the sum over the $n-2$ boundary components of $\pi_{(1,2,\ldots, n-1)}$.
\end{proof}

\section{Additional remarks}
In this note, we classified the set of generalized permutohedra that are determined by the facet inequalities $s_{aJ} = s_{a\vert J} -c_{J}\ge 0$, that is $s_{a\vert J}\ge c_J$, where $c_J = \sum_{i,j\in J: i<j} c_{ij}$ and $c_{ij}$ are the $\binom{n}{2}$ constants and $J$ varies over all proper nonempty subsets of $\{1,\ldots, n\}$.  The main result was to prove the face distances $s_{aJ}\ge 0$ determine a \textit{zonotopal} generalized permutohedron.  In fact, the constants $c_{ij}$ are the dilation factors for the root directions.  Let us point out that as the parameters $c_{ij}$ vary, we obtain the \textit{edge}-deformation cone, as discussed in the Appendix of \cite{FacesPermutohedra}, see in particular Definition 15.1.

In Sections \ref{sec: Appendix triangulations dual associahedra} and \ref{sec:mAlpha} we discussed the relationship between the triangulation of the permutohedral cone 
$$\langle e_1-e_2,\ldots, e_{n-1}-e_n\rangle_+$$
and the set of tangent cones to the associahedron.

It turns out that there is a similar dual interpretation of the cyclohedron which uses triangulations of the convex hull of \textit{all} roots $e_i-e_j$ parametrized by $n$-cycles.  Each such triangulation forms a complete fan; it is an example of a simplicial complex known as a blade.  Characteristic functions of such were studied in \cite{EarlyBlades}.

\section{Acknowledgements}
We thank Freddy Cachazo, Sebastian Mizera and Victor Reiner for interesting and helpful discussions.  We also thank the anonymous referee for careful reading and insightful comments.

\end{document}